\documentclass[12pt,reqno]{amsart}

\usepackage{amsmath,amsfonts,amsthm,amssymb,amsxtra}
\usepackage{bbm} 
\usepackage{dsfont}
\usepackage[hidelinks]{hyperref} 
\usepackage{color}
\usepackage{comment}
\usepackage[numbers, sort]{natbib}
\usepackage{enumitem}
\usepackage{moreenum}
\usepackage[greek,english]{babel}

\numberwithin{equation}{section}
\newtheorem{theorem}{Theorem}[section]
\newtheorem{proposition}[theorem]{Proposition}

\theoremstyle{definition}

\theoremstyle{remark}

\newtheorem{remark}[theorem]{Remark}

\newcommand{\1}{\mathds{1}}

\renewcommand{\epsilon}{\varepsilon}

\newcommand{\N}{\mathbb{N}}

\renewcommand{\phi}{\varphi}
\newcommand{\R}{\mathbb{R}}

\newcommand{\Haus}{\mathcal{H}}

\DeclareMathOperator{\dist}{dist}
\DeclareMathOperator{\diam}{diam}

\DeclareMathOperator{\Tr}{Tr}
\DeclareMathOperator{\tr}{Tr}

\newcommand{\limplus}{{\mathchoice{\vcenter{\hbox{$\scriptstyle +$}}}
		{\vcenter{\hbox{$\scriptstyle +$}}}
		{\vcenter{\hbox{$\scriptscriptstyle +$}}}
		{\vcenter{\hbox{$\scriptscriptstyle +$}}}
}}
\newcommand{\limminus}{{\mathchoice{\vcenter{\hbox{$\scriptstyle -$}}}
		{\vcenter{\hbox{$\scriptstyle -$}}}
		{\vcenter{\hbox{$\scriptscriptstyle -$}}}
		{\vcenter{\hbox{$\scriptscriptstyle -$}}}
}}

\begin{document}

\title[Riesz means of Laplace eigenvalues]{An asymptotic shape optimization problem\\ for Riesz means of Laplace eigenvalues}

\author{Rupert L. Frank}
\address[Rupert L. Frank]{Mathe\-matisches Institut, Ludwig-Maximilians Universit\"at M\"unchen, The\-resienstr.~39, 80333 M\"unchen, Germany, and Munich Center for Quantum Science and Technology, Schel\-ling\-str.~4, 80799 M\"unchen, Germany, and Mathematics 253-37, Caltech, Pasa\-de\-na, CA 91125, USA}
\email{r.frank@lmu.de}

\author{Simon Larson}
\address{\textnormal{(Simon Larson)} Mathematical Sciences, Chalmers University of Technology and the University of Gothenburg, SE-41296 Gothenburg, Sweden}
\email{larsons@chalmers.se}

\begin{abstract}
	We review our recent results on the problem of optimizing Riesz means of Laplace eigenvalues among convex sets of given measure in the regime where the cut-off parameter in the definition of the Riesz means tends to infinity. We show that for a certain range of Riesz exponents, the optimizing sets converge to a ball. We also present some new results where we optimize over disjoint unions of convex sets.
\end{abstract}

\newcommand\blfootnote[1]{%
	\begingroup
	\renewcommand\thefootnote{}\footnote{#1}%
	\addtocounter{footnote}{-1}%
	\endgroup
}


\thanks{\copyright\, 2026 by the authors. This paper may be reproduced, in its entirety, for non-commercial purposes.\\
	Partial support through US National Science Foundation grant DMS-1954995 (R.L.F.), the German Research Foundation grants EXC-2111-390814868 and TRR 352-Project-ID 470903074 (R.L.F.), the Knut and Alice Wallenberg foundation grant KAW 2017.0295 (S.L.), as well as the Swedish Research Council grant no.~2023-03985 (S.L.) is acknowledged.}

\maketitle

\section{Introduction}

This note is based on a talk by the first author at the QMath conference in Munich in 2025 and he is grateful to the organizers for the invitation to speak and to contribute to these proceedings. The talk was based on results obtained by both authors in the papers \cite{FrankLarson_Crelle20,FrankLarson_24,FrankLarson_CPAM26,FrankLarson_heat}. In addition to summarizing those, we also include a number of results that appeared in slightly different form in the preprint version of \cite{FrankLarson_CPAM26}, but not in the final version.

\bigskip

Let $\Omega\subset\R^d$, $d\geq 2$, be an open set and let $-\Delta_\Omega^{\rm D}$ and $-\Delta_\Omega^{\rm N}$ denote the Dirichlet and Neumann Laplacians on $\Omega$, respectively. We write $-\Delta_\Omega^\sharp$ when we refer to either one of these two operators.

For $\gamma>0$ and $\lambda\geq 0$ we are interested in the Riesz means
$$
\tr(-\Delta_\Omega^\sharp - \lambda)_\limminus^\gamma \,.
$$
The Riesz mean $\Tr(-\Delta_{\Omega}^\sharp-\lambda)_\limminus^0$ is defined as the dimension of the range of the spectral projection $\1_{[0,\lambda)}(-\Delta_\Omega^\sharp)$. (This choice is somewhat arbitrary. Our main results would not change if instead we would use the projection $\1_{[0,\lambda]}(-\Delta_\Omega^\sharp)$.)

More specifically, we are interested in the shape optimization problems
\begin{align*}
	& \sup\bigl\{\Tr(-\Delta^{\rm D}_\Omega-\lambda)_\limminus^\gamma:\ \Omega \subset\R^d \, \text{open} \,,\ |\Omega|=1\bigr\}\,,\\
	& \inf\bigl\{\Tr(-\Delta^{\rm N}_\Omega-\lambda)_\limminus^\gamma:\ \Omega \subset\R^d \, \text{open} \,,\ |\Omega|=1\bigr\}\,.
\end{align*}	
Note that it does not represent a loss of generality to normalize the measure of $\Omega$ to be one. Any other value can be reduced to this case by rescaling and adjusting $\lambda$.

We are interested in the behavior of these optimization problems in the limit $\lambda\to\infty$ and, in particular, in the question:
\begin{quote}
	Do optimizers to the above optimization problems converge, up to translations, to a ball as $\lambda\to 0$?
\end{quote}
This question remains open in this generality and the topic of this note is to present some results when the optimization is restricted to certain subclasses of open sets.

Before describing these results in detail, however, we explain why one might think that the above question can be answered affirmatively. Let us assume temporarily that $\gamma>0$. Then the two-term Weyl asymptotics state that
\begin{equation*}
	\begin{aligned}
		\Tr(-\Delta^{\rm D}_\Omega-\lambda)_\limminus^\gamma = L_{\gamma,d}^{\rm sc} |\Omega| \lambda^{\gamma+\frac d2} - \frac14 L_{\gamma,d-1}^{\rm sc} \mathcal H^{d-1}(\partial\Omega) \lambda^{\gamma+\frac{d-1}{2}} + o(\lambda^{\gamma+\frac{d-1}{2}}) \,,\\
		\Tr(-\Delta^{\rm N}_\Omega-\lambda)_\limminus^\gamma = L_{\gamma,d}^{\rm sc} |\Omega| \lambda^{\gamma+\frac d2} + \frac14 L_{\gamma,d-1}^{\rm sc} \mathcal H^{d-1}(\partial\Omega) \lambda^{\gamma+\frac{d-1}{2}} + o(\lambda^{\gamma+\frac{d-1}{2}}) \,.
	\end{aligned}
\end{equation*}
Here $L_{\gamma,d}^{\rm sc}$ are certain constants given explicitly in \eqref{eq: semiclassical constant} below, $\mathcal H^{d-1}(\partial\Omega)$ denotes the surface area of the boundary of $\Omega$ and we emphasize the different signs of the subleading term depending on the boundary condition. Typically, these asymptotics are proved for a given set $\Omega$, taking the limit $\lambda\to\infty$, but it is clear from these proofs that they are valid when $\Omega$ depends in a sufficiently regular way on $\lambda$. We assume that this sufficiently regular dependence is satisfied for the family $\Omega_\lambda$ formed by optimizers for our shape optimization problem. We see that, because of the volume normalization, the leading term is $L_{\gamma,d}^{\rm sc} \lambda^{\gamma+\frac d2}$. In order to optimize the subleading term, one wants to maximize $- \frac14 L_{\gamma,d-1}^{\rm sc} \mathcal H^{d-1}(\partial\Omega) \lambda^{\gamma+\frac{d-1}{2}}$ in the Dirichlet case and to minimize $+\frac14 L_{\gamma,d-1}^{\rm sc} \mathcal H^{d-1}(\partial\Omega) \lambda^{\gamma+\frac{d-1}{2}}$ in the Neumann case. Thus, in both cases one wants to minimize $\mathcal H^{d-1}(\partial\Omega)$ among sets with $|\Omega|=1$. By the isoperimetric inequality, the unique solution to this optimization problem is a ball, which motivates why one might be tempted to expect an affirmative answer to the above question.

The subtlety with this argument is that very little is known about the dependence of the optimizers on $\lambda$. In particular the `sufficiently regular dependence on $\lambda$' is unknown.

\medskip

Following \cite{LarsonJST} we restrict ourselves to the optimization problem in much smaller subclasses of open sets. In \cite{FrankLarson_Crelle20,FrankLarson_CPAM26} we considered the case where we optimize within the class of \emph{convex} sets. These results are summarized in Section \ref{sec:convex}. Then, in Section~\ref{sec:multicomp}, we consider the class of sets that are \emph{disjoint unions of convex open sets}.

We proceed to a simplified statement of the corresponding results. For $d\geq 1$ let $\mathcal{C}_d$ denote the collection of all open, bounded, non-empty, and convex subsets of $\R^d$. Let also $\widetilde{\mathcal{C}}_d$ denote the collection of all open, non-empty subsets of $\R^d$ whose connected components belong to $\mathcal{C}_d$. The first pair of optimization problems is
\begin{equation}
	\label{eq:defopt}
	\begin{split}
	M^{\rm D}_\gamma(\lambda)&:=\sup\bigl\{\Tr(-\Delta^{\rm D}_\Omega-\lambda)_\limminus^\gamma: \Omega \in \mathcal{C}_d, |\Omega|=1\bigr\}\,,\\
	M^{\rm N}_\gamma(\lambda)&:=\inf\bigl\{\Tr(-\Delta^{\rm N}_\Omega-\lambda)_\limminus^\gamma: \Omega \in \mathcal{C}_d, |\Omega|=1\bigr\}\,,
	\end{split}
\end{equation}	
and the second pair is
\begin{equation}
	\label{eq:defopttilde}
	\begin{split}
	\widetilde M_\gamma^{\rm D}(\lambda)&:=\sup\bigl\{\Tr(-\Delta_\Omega^{\rm D}-\lambda)_\limminus^\gamma: \Omega \in \widetilde{\mathcal{C}}_d, |\Omega|=1\bigr\}\,,\\
	\widetilde M_\gamma^{\rm N}(\lambda)&:=\inf\bigl\{\Tr(-\Delta_\Omega^{\rm N}-\lambda)_\limminus^\gamma: \Omega \in \widetilde{\mathcal{C}}_d, |\Omega|=1\bigr\}\,.
	\end{split}
\end{equation}

The following are our main results.

\begin{theorem}\label{main}
	Let $\sharp\in\{{\rm D},{\rm N}\}$. Let $\gamma>0$ if $d=2$ and $\gamma\geq 1/2$ if $d\geq 3$. Let $\{\lambda_j\}_{j\geq 1}\subset (0, \infty)$ be a sequence with $\lim_{j\to \infty}\lambda_j = \infty$ and let $\{\Omega_j\}_{j\geq 1} \subset\mathcal C_d$ be a sequence with $|\Omega_j|=1$ satisfying
	$$
	\lim_{j\to\infty} \lambda_j^{-\gamma-\frac{d-1}{2}} \left( \Tr(-\Delta_{\Omega_j}^\sharp-\lambda_j)_\limminus^\gamma - M_{\gamma}^{\sharp}(\lambda_j) \right) = 0 \,.
	$$
	Then $\{\Omega_j\}_{j\geq 1}$ converges, up to translations, in Hausdorff sense to a ball of unit measure.
\end{theorem}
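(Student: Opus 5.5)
The proof has three ingredients: a two-term Weyl expansion with an error that is uniform over convex sets, a compactness argument, and the stability of the isoperimetric inequality among convex bodies.

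\emph{Step 1 (uniform two-term bound).} I would aim for an estimate, valid for all $\Omega\in\mathcal C_d$ with $|\Omega|=1$, of the form
$$
\Bigl|\Tr(-\Delta_\Omega^\sharp-\lambda)_\limminus^\gamma - L^{\rm sc}_{\gamma,d}\lambda^{\gamma+\frac d2} \pm \tfrac14 L^{\rm sc}_{\gamma,d-1}\mathcal H^{d-1}(\partial\Omega)\lambda^{\gamma+\frac{d-1}2}\Bigr| \le \mathcal H^{d-1}(\partial\Omega)\lambda^{\gamma+\frac{d-1}{2}}\, \epsilon\bigl(r_{\rm in}(\Omega)\sqrt\lambda\bigr).
$$
Here the sign is $+$ for D and $-$ for N, $r_{\rm in}$ is the inradius, and $\epsilon(t)\to0$ as $t\to\infty$. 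The error depends on $\Omega$ only through the scale-invariant quantity $r_{\rm in}\sqrt\lambda$, which measures how many wavelengths fit inside the set.
\begin{itemize}
\item For the Dirichlet case I would localize with a partition of unity adapted to the distance to the boundary. Convexity gives the local model: near $\partial\Omega$ the set looks like a half-space on a scale comparable to $r_{\rm in}$. I would then use Berezin--Li--Yau-type lower and upper bounds with boundary corrections, in the spirit of \cite{FrankLarson_Crelle20}.
\item For the Neumann case the same approach needs uniform extension or Poincar\'e bounds on convex pieces.
\item The restriction $\gamma\ge1/2$ for $d\ge3$ (and $\gamma>0$ for $d=2$) should enter precisely here. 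For small $\gamma$ one needs sharp control of the one-dimensional boundary layer.
\end{itemize}

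\emph{Step 2 (compactness and nondegeneracy).} Next I show that $r_{\rm in}(\Omega_j)\sqrt{\lambda_j}\to\infty$. In the Dirichlet case, convexity and $|\Omega|=1$ imply $\mathcal H^{d-1}(\partial\Omega)\gtrsim 1/r_{\rm in}$. If the inradius degenerated relative to $\lambda^{-1/2}$, a crude bound of Berezin--Li--Yau-type with a Hardy or inradius-improved remainder would make the trace smaller than $L^{\rm sc}_{\gamma,d}\lambda^{\gamma+d/2}- c\lambda^{\gamma+\frac{d-1}2}\cdot(\text{large})$. Comparing with the ball as a competitor, which by Step 1 gives $M^{\rm D}_\gamma(\lambda)\ge L^{\rm sc}\lambda^{\gamma+d/2}-\tfrac14L^{\rm sc}_{\gamma,d-1}\mathcal H^{d-1}(\partial B)\lambda^{\gamma+\frac{d-1}2}(1+o(1))$, excludes this. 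The Neumann case is analogous, with a Kr\"oger-type upper bound replaced by a lower bound for thin convex sets.

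With the same comparison I also get that $\mathcal H^{d-1}(\partial\Omega_j)$ stays bounded. Convex sets of unit volume and bounded perimeter have bounded diameter, so Blaschke selection gives a Hausdorff-convergent subsequence after translation.

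\emph{Step 3 (conclusion).} Plugging the asymptotically optimal $\Omega_j$ and the ball into Step 1, and using $\epsilon\to0$ on both, gives
$$
\limsup_j \mathcal H^{d-1}(\partial\Omega_j)\le \mathcal H^{d-1}(\partial B),
$$
where $B$ is the ball of unit measure. Perimeter and volume are continuous under Hausdorff convergence of convex bodies. Hence any limit point is a convex set of unit measure with perimeter at most that of $B$, and by the equality case of the isoperimetric inequality it is the ball. Since every subsequence has a further subsequence converging to the ball, the whole sequence converges up to translations.

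The main obstacle is Step 1: a remainder that is genuinely $o(\mathcal H^{d-1}(\partial\Omega)\lambda^{\gamma+\frac{d-1}2})$ uniformly over convex sets, with no regularity assumptions, and especially for Neumann and small $\gamma$. I would first try the Dirichlet case via heat-kernel or Riesz-mean localization of \cite{FrankLarson_Crelle20}, and then transfer to Neumann using the methods of \cite{FrankLarson_CPAM26}.
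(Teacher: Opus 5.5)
\textbf{There is a genuine gap in your Step 2, and you have misplaced where the hypothesis on $\gamma$ enters.}

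Your Steps 1 and 3 are exactly the second half of the paper's argument. The uniform two-term expansion is Theorem~\ref{thm: Weyl asymptotics convex} (from \cite{FrankLarson_24}). Once $r_{\rm in}(\Omega_j)\sqrt{\lambda_j}\to\infty$ is known, comparison with the ball, Blaschke selection and the isoperimetric inequality finish the proof as you describe. However, that expansion holds for \emph{every} $\gamma>0$ and $d\ge 2$, so the restriction on $\gamma$ cannot enter in Step 1. It enters in Step 2.

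To exclude a subsequence with $r_{\rm in}(\Omega_j)\sqrt{\lambda_j}$ bounded, you invoke a ``Berezin--Li--Yau bound with Hardy or inradius-improved remainder.'' If such a bound is meant to hold for all convex sets, it is the two-term inequality of Theorem~\ref{thm: improved inequality above critical gamma}. The paper notes that this is valid essentially only for $\gamma>\gamma_d^\sharp$. The known improved Berezin--Li--Yau/Kr\"oger bounds live at $\gamma=1$, and Aizenman--Lieb propagates them only upward. Since all that is known is $\gamma_d^\sharp<1$, your route covers $\gamma\ge 1$. It does not cover $\gamma\in[\frac12,1)$ for $d\ge3$, nor small $\gamma>0$ for $d=2$, where it would need P\'olya-type information for convex planar sets.

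A clear symptom is that your Step 2 never uses the hypothesis on $\gamma$. Suppose it worked for all $\gamma>0$. Combined with Step 1 it would give $M^\sharp_\gamma(\lambda)\le L^{\rm sc}_{\gamma,d}\lambda^{\gamma+\frac d2}(1+o(1))$ (reversed for N). By Proposition~\ref{prop:shapeoptasymp r} this means $r^\sharp_{\gamma+\frac12,d-1}=1$ for all $\gamma>0$, i.e.\ $\gamma^\sharp_{d-1}\le\frac12$. That is precisely the open assumption of Theorem~\ref{main3}. If $\gamma^\sharp_{d-1}>\frac12$, part (b) of Theorem~\ref{thm: shape optimization convex} shows that for small $\gamma$ almost-optimizers really are thin.

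\textbf{The missing idea is a partially semiclassical limit with dimensional reduction.} Assume $r_{\rm in}(\Omega_j)\sqrt{\lambda_j}$ stays bounded along a subsequence.
\begin{enumerate}
\item The John ellipsoid gives $m\in\{1,\dots,d-1\}$ directions of length $\sim\lambda_j^{-1/2}$ and $d-m$ directions $\gg\lambda_j^{-1/2}$. After anisotropic rescaling and Blaschke selection, \cite[Theorem 1.8]{FrankLarson_CPAM26} gives the limit of $\Tr(-\Delta^\sharp_{\Omega_j}-\lambda_j)^\gamma_\limminus/(L^{\rm sc}_{\gamma,d}\lambda_j^{\gamma+\frac d2})$. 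It is a Fubini average over $y\in P^\perp\Omega_*$ of $\Tr(-\Delta^\sharp_{\Omega_*(y)}-1)^{\gamma+\frac{d-m}2}_\limminus$, normalized by $L^{\rm sc}_{\gamma+\frac{d-m}2,m}$. The long directions act semiclassically and raise the Riesz exponent by $\frac{d-m}2$.
\item This limit is $<1$ for D (resp.\ $>1$ for N) as soon as $\gamma+\frac{d-m}2>\gamma^\sharp_m$. That contradicts Proposition~\ref{prop:shapeoptasymp r}, whose limit here is $r^\sharp_{\gamma+\frac12,d-1}=1$.
\item For $m\le d-2$ the condition $\gamma+\frac{d-m}2>\gamma^\sharp_m$ is automatic, since $\gamma^\sharp_m\le 1$. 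For $m=d-1$ it reads $\gamma+\frac12>\gamma^\sharp_{d-1}$, and this is exactly where the hypothesis enters:
\begin{itemize}
\item for $d=2$ it follows from $\gamma^\sharp_1=0$;
\item for $d\ge3$ and $\gamma>\frac12$ it follows from Berezin--Li--Yau/Kr\"oger;
\item for $\gamma=\frac12$ it requires the nontrivial Theorem~\ref{smallerthanone}, $\gamma^\sharp_{d-1}<1$.
\end{itemize}
\end{enumerate}
Your Neumann sketch, with a lower bound for thin sets, has the same gap and needs the same fix.
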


\begin{theorem}\label{main2}
	Let $\sharp\in\{{\rm D},{\rm N}\}$. Let $\gamma\geq 1$. Let $\{\lambda_j\}_{j\geq 1}\subset (0, \infty)$ be a sequence with $\lim_{j\to \infty}\lambda_j = \infty$ and let $\{\widetilde\Omega_j\}_{j\geq 1} \subset\widetilde{\mathcal C}_d$ be a sequence with $|\widetilde\Omega_j|=1$ satisfying
	$$
	\lim_{j\to\infty} \lambda_j^{-\gamma-\frac{d-1}{2}} \left( \Tr(-\Delta_{\widetilde\Omega_j}^\sharp-\lambda_j)_\limminus^\gamma - \widetilde M_{\gamma}^{\sharp}(\lambda_j) \right) = 0 \,.
	$$
	Then $\{\widetilde\Omega_j\}_{j\geq 1}$ converges, up to translations, in Hausdorff sense to a ball of unit measure.
\end{theorem}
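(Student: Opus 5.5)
The plan is to reduce Theorem~\ref{main2} to the two-term asymptotics for convex sets. We need a version uniform over convex sets, together with a one-sided a priori bound that controls the number and sizes of the components. The starting point is additivity of the trace over connected components: for $\widetilde\Omega=\bigcup_k \Omega^{(k)}$ with $\Omega^{(k)}\in\mathcal C_d$,
$$
\Tr(-\Delta^\sharp_{\widetilde\Omega}-\lambda)_\limminus^\gamma=\sum_k \Tr(-\Delta^\sharp_{\Omega^{(k)}}-\lambda)_\limminus^\gamma .
$$

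\textbf{Step 1: comparison with a ball.} Testing $\widetilde M^\sharp_\gamma(\lambda)$ with a single ball $B$ of unit measure and applying the two-term Weyl asymptotics to $B$ gives, in the Dirichlet case,
$$
\widetilde M^{\rm D}_\gamma(\lambda)\ge L^{\rm sc}_{\gamma,d}\lambda^{\gamma+\frac d2}-\tfrac14 L^{\rm sc}_{\gamma,d-1}\mathcal H^{d-1}(\partial B)\lambda^{\gamma+\frac{d-1}2}+o(\lambda^{\gamma+\frac{d-1}2}),
$$
with the analogous upper bound for Neumann. Hence the near-optimal sets $\widetilde\Omega_j$ satisfy the same inequality, with $\mathcal H^{d-1}(\partial B)$ as the benchmark.

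\textbf{Step 2: a uniform two-term bound for each component.} I need an estimate valid for every $\Omega\in\mathcal C_d$ and $\gamma\ge1$. In the Dirichlet case it should read
$$
\Tr(-\Delta^{\rm D}_\Omega-\lambda)_\limminus^\gamma\le L^{\rm sc}_{\gamma,d}|\Omega|\lambda^{\gamma+\frac d2}-\tfrac14 L^{\rm sc}_{\gamma,d-1}\mathcal H^{d-1}(\partial\Omega)\lambda^{\gamma+\frac{d-1}2}+C\,\lambda^{\gamma+\frac{d-1}2}\,\mathcal H^{d-1}(\partial\Omega)\,\eta\bigl(r_{\rm in}(\Omega)\sqrt\lambda\bigr),
$$
where $\eta(t)\to0$ as $t\to\infty$ and $\eta$ is bounded. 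For Neumann the signs are reversed and the inequality becomes a lower bound.

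For Dirichlet with $\gamma\ge1$, I would obtain this from the Berezin--Li--Yau inequality, which gives the leading term with no error. The boundary term then comes from the sharp heat-trace bounds for convex sets (the results of \cite{FrankLarson_heat}). These transfer to Riesz means with $\gamma\ge1$ via the Laplace-transform / Aizenman--Lieb representation. For Neumann, the corresponding Kröger-type lower bound should be combined in the same way with the heat-trace lower bound for convex sets. The condition $\gamma\ge1$ enters exactly here: for $\gamma\ge1$ the representation of $(\cdot)_\limminus^\gamma$ as a positive superposition of heat kernels, or of $(\cdot)_\limminus^1$, is what makes the heat-trace input usable.

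\textbf{Step 3: summing over components.} Summing the bound of Step 2 over $k$ and comparing with Step 1 should give
$$
\sum_k \mathcal H^{d-1}(\partial\Omega^{(k)})\bigl(1-C\eta(r_{\rm in}(\Omega^{(k)})\sqrt{\lambda_j})\bigr)\le \mathcal H^{d-1}(\partial B)+o(1).
$$
Two consequences are needed from this.

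First, components with inradius $\lesssim\lambda_j^{-1/2}$ must carry vanishing total measure. The estimate above does not show this on its own, because for such components the bracket may be close to zero or negative. For them I would instead use the crude bounds available for thin convex sets. In the Dirichlet case, such a component has trace $\le L^{\rm sc}_{\gamma,d}|\Omega^{(k)}|\lambda^{\gamma+\frac d2}(1-c)$, since its first eigenvalue is $\gtrsim r_{\rm in}^{-2}$ and Berezin--Li--Yau is strictly improved there. This loses order $\lambda^{\gamma+d/2}$ times their measure, which is far more than the subleading gain. In the Neumann case, the corresponding crude lower bound should come from the $\gamma\ge1$ monotonicity argument.

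Second, the remaining components therefore satisfy $\sum_k \mathcal H^{d-1}(\partial\Omega^{(k)})\le\mathcal H^{d-1}(\partial B)+o(1)$ with $\sum_k|\Omega^{(k)}|\to1$. Strict concavity in the isoperimetric inequality, $\sum_k |\Omega^{(k)}|^{\frac{d-1}d}\ge(\sum_k|\Omega^{(k)}|)^{\frac{d-1}d}$ with equality only for a single piece, then forces one component to carry measure $\to1$. The rest have vanishing total measure and perimeter.

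\textbf{Step 4: conclusion, and the main obstacle.} The dominant component then has perimeter converging to that of the ball, so by quantitative isoperimetry plus Blaschke selection for convex sets it converges in Hausdorff sense to a ball after translation. The leftover small components must also be handled in the Hausdorff statement. I would try to show that they are in fact absent for large $j$, using the gain from the strict concavity in Step 3, or else interpret the convergence modulo sets of vanishing measure.

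I expect the main obstacle to be Step 2: the uniform estimate with an error that is small relative to the perimeter, uniformly over all convex sets including very elongated ones, together with the treatment of tiny components in the Neumann case.
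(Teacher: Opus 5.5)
Your route is viable and genuinely different from the paper's. However, the justifications you give for its two key inputs would not work as written.

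First, Step~2 cannot be obtained from heat-trace bounds. The function $E\mapsto(\lambda-E)_\limplus^\gamma$ vanishes for $E\geq\lambda$, so it is not a positive superposition of the $e^{-tE}$. The Laplace transform goes from Riesz means to heat traces, and any Tauberian reversal loses the sharp constant $\frac14 L^{\rm sc}_{\gamma,d-1}$. The estimate you want is exactly Theorem~\ref{thm: Weyl asymptotics convex}, which holds for all $\gamma>0$. Cite it, and use it only on \emph{thick} components, $r_{\rm in}\sqrt{\lambda_j}\geq T$; then boundedness of $\eta$ plays no role.

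Second, for \emph{thin} components, $r_{\rm in}\sqrt{\lambda_j}<T$, the bound $\lambda_1\gtrsim r_{\rm in}^{-2}$ only disposes of those below a fixed small constant. But $T=T_\epsilon$ must be large, so that $C\eta(T)\leq\epsilon$; with a fixed threshold you never reach $\sum\mathcal H^{d-1}(\partial\Omega^{(k)})\leq\mathcal H^{d-1}(\partial B)+o(1)$. What you need is the improved Berezin--Li--Yau/Kr\"oger inequality quoted in the sketch of Theorem~\ref{smallerthanone}: $c_B<1<C_B$ for $\lambda\leq B\,r_{\rm in}^{-2}$. Lift it from $\gamma=1$ to $\gamma>1$ by the Aizenman--Lieb integral, which only involves $\mu\leq\lambda$ and so respects the constraint. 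In the Neumann case, ``monotonicity'' alone gives Kr\"oger with constant $1$ and hence no gain. This step, not Step~2, is where $\gamma\geq1$ enters your argument.

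With these inputs, for fixed $\epsilon$ you get $(1-\epsilon)\sum_{\rm thick}\mathcal H^{d-1}(\partial\Omega^{(k)})\leq\mathcal H^{d-1}(\partial B)+o(1)$ and $\sum_{\rm thin}|\Omega^{(k)}|=O_\epsilon(\lambda_j^{-1/2})$. Using $\sum a_k^{(d-1)/d}\geq(\max a_k)^{-1/d}\sum a_k$, this gives $\liminf_j\max_k|\Omega^{(k)}|\geq(1-\epsilon)^d$; then let $\epsilon\to0$.

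Finally, in Step~4 drop the first option: the small components cannot be shown absent. Adding a distant ball of radius $\ll\lambda_j^{-1/2}$ (and rescaling to unit measure) preserves almost-optimality. Also, Dirichlet components with zero trace may have huge perimeter. As in the paper, the conclusion is Hausdorff convergence of the largest component.

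Compared with your route, the paper derives Theorem~\ref{main2} from Theorem~\ref{thm: shape opt multicomponent}\ref{eq: multicomp supercritical}. That needs $\gamma>\gamma_d^\sharp$, and hence Theorem~\ref{smallerthanone} at $\gamma=1$. The paper splits components by volume rather than inradius. It uses the non-sharp two-term inequality of Theorem~\ref{thm: improved inequality above critical gamma} to bound perimeters and discard negligible components. It treats each component of volume $\gtrsim\lambda_j^{-d/2}$ by scaling and the asymptotics of the convex problem $M^\sharp_\gamma$ (Theorem~\ref{thm: shape optimization convex}, which contains the partially semiclassical analysis of thin sets).

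Your inradius split applies the sharp uniform asymptotics directly, and uses exactly the two ingredients behind Theorem~\ref{smallerthanone}. So for $\gamma\geq1$ it bypasses the critical exponent, Theorem~\ref{thm: improved inequality above critical gamma} and Theorem~\ref{thm: shape optimization convex}. The paper's organization instead covers every $\gamma>\gamma_d^\sharp$, including the conditional Theorem~\ref{main4} with $\gamma<1$. There your thin-component step would have to be replaced by Theorem~\ref{thm: improved inequality above critical gamma}.
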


Both theorems are applicable, in particular, when $\Omega_j$ and $\widetilde\Omega_j$ are optimizers for $M_{\gamma}^{\sharp}(\lambda_j)$ and $\widetilde M_{\gamma}^{\sharp}(\lambda_j)$. (The existence of an optimizer for $M_{\gamma}^{\sharp}(\lambda)$ is proved in \cite[Lemma 3.1]{LarsonJST} and \cite[Lemma 2.6]{FrankLarson_CPAM26}.) We emphasize, however, that the theorem does not require the exact minimality. A certain notion of almost minimality suffices for the conclusion to hold.

We also stress that Theorem \ref{main} covers the full range $\gamma>0$ in dimension $d=2$. In the remaining cases we can only prove a conditional result.

\begin{theorem}\label{main3}
	Let $\sharp\in\{{\rm D},{\rm N}\}$ and let $d\geq 3$. Assume that
	\begin{align*}
		& \Tr(-\Delta_{\omega}^{\rm D}-\lambda)_\limminus^{1/2} \leq L_{1/2,d-1}^{\rm sc} |\omega|\lambda^{\frac d2} & \text{if}\ \sharp={\rm D} \,, \\
		& \Tr(-\Delta_{\omega}^{\rm N}-\lambda)_\limminus^{1/2} \geq L_{1/2,d-1}^{\rm sc} |\omega|\lambda^{\frac d2} & \text{if}\ \sharp={\rm N} \,,
	\end{align*}
	for any $\lambda\geq 0$ and $\omega\in\mathcal C_{d-1}$. Then the conclusion of Theorem \ref{main} holds for all $\gamma>0$.
\end{theorem}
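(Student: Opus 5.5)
The plan follows the structure behind Theorem \ref{main}: a two-term upper (Dirichlet) or lower (Neumann) bound valid \emph{uniformly} over convex sets, compared with the lower bound obtained by testing against balls. The point of the hypothesis in Theorem \ref{main3} is to supply this uniform bound for all $\gamma>0$ when $d\ge3$, where it is otherwise only available for $\gamma\ge 1/2$.

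\textbf{Step 1: uniform two-term bound.} I would first show that for every $\gamma>0$, uniformly over $\Omega\in\mathcal C_d$ with $|\Omega|=1$ and any inradius lower bound, one has in the Dirichlet case
\begin{equation*}
\Tr(-\Delta^{\rm D}_\Omega-\lambda)_\limminus^\gamma \le L^{\rm sc}_{\gamma,d}\lambda^{\gamma+\frac d2} - \tfrac14 L^{\rm sc}_{\gamma,d-1}\mathcal H^{d-1}(\partial\Omega)\lambda^{\gamma+\frac{d-1}2} + o(\lambda^{\gamma+\frac{d-1}2}) .
\end{equation*}
I would also show the reverse inequality in the Neumann case. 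The hypothesis is used as follows. Slice $\Omega$ in a direction $x_1$ and use the operator inequality $-\Delta \ge -\partial_1^2$ fibrewise, or the Aizenman--Lieb/Laptev--Weidl lifting argument. This reduces the $\gamma$-Riesz mean in dimension $d$ to one-dimensional Riesz means of order $\gamma+\frac{d-1}{2}$ on intervals. Alternatively, it reduces to Riesz means of order $1/2$ on $(d-1)$-dimensional convex cross sections $\omega\in\mathcal C_{d-1}$, for which the assumed Berezin--Li--Yau/Kröger type inequality holds.

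More precisely, I would use the assumed bound to get, for all $\gamma\ge 1/2$... no: by Aizenman--Lieb monotonicity, the assumed order-$1/2$ bound on cross sections gives semiclassical bounds of all orders $\ge1/2$ in dimension $d-1$. Combining this with a one-dimensional two-term bound for intervals, which is valid for every $\gamma>0$, and with a partition of $\Omega$ near its boundary, should give the displayed two-term bound with the sharp surface coefficient for all $\gamma>0$. This is exactly where the proof of Theorem \ref{main} restricts to $\gamma\ge1/2$ in $d\ge3$, because there the cross-sectional Riesz mean of order $\gamma-\frac12$ (or order $0$) lacks a semiclassical bound. I will model the argument on the $d=2$ case, where cross sections are intervals and the bound is explicit.

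\textbf{Step 2: compactness and the isoperimetric conclusion.} Given an almost optimizing sequence $\Omega_j$, test with the unit-measure ball $B$ and use the two-term Weyl law for the fixed set $B$. This gives
\begin{equation*}
M^\sharp_\gamma(\lambda_j)\ge L^{\rm sc}_{\gamma,d}\lambda_j^{\gamma+\frac d2}-\tfrac14 L^{\rm sc}_{\gamma,d-1}\mathcal H^{d-1}(\partial B)\lambda_j^{\gamma+\frac{d-1}2}+o(\cdot)
\end{equation*}
in the Dirichlet case, with the analogous bound for Neumann. Next I need a crude a priori bound showing that $\Omega_j$ cannot degenerate. A first, lossy Berezin-type inequality with a boundary term proportional to $\mathcal H^{d-1}(\partial\Omega)$ would give $\sup_j\mathcal H^{d-1}(\partial\Omega_j)<\infty$. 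For convex sets of unit volume this bounds the diameter, hence the inradius from below, so Blaschke selection yields Hausdorff convergence along subsequences to some convex $\Omega_*$ of unit measure. Comparing with Step 1 then gives $\limsup_j\mathcal H^{d-1}(\partial\Omega_j)\le \mathcal H^{d-1}(\partial B)$. Since perimeter is continuous under Hausdorff convergence of convex bodies, $\mathcal H^{d-1}(\partial\Omega_*)\le\mathcal H^{d-1}(\partial B)$. The isoperimetric inequality with its equality case forces $\Omega_*$ to be a ball, and uniqueness of the limit upgrades subsequential convergence to convergence of the whole sequence.

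\textbf{Main obstacle.} The hard part is Step 1: obtaining the sharp constant $\frac14 L^{\rm sc}_{\gamma,d-1}$ with an error that is $o(\lambda^{\gamma+\frac{d-1}2})$ uniformly over convex sets, for $\gamma<1/2$. The boundary-layer localization must control curvature-free convex geometry through the cross-sectional hypothesis alone. I would first try to rerun the proof of Theorem \ref{main}, replacing each use of the Berezin--Li--Yau inequality of order $\gamma\ge1/2$ on $(d-1)$-dimensional slices with the assumed order-$1/2$ inequality. I would then need to check that the only place requiring $\gamma\ge\frac12$ was precisely that slicewise semiclassical bound.
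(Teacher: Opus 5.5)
There is a genuine gap. You have placed the hypothesis where it is not needed, and the step where it is needed is the one you pass over.

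Your Step 1 requires no hypothesis at all. Theorem \ref{thm: Weyl asymptotics convex} gives the two-term expansion with the sharp boundary coefficient $\frac14 L^{\rm sc}_{\gamma,d-1}$ for every $\gamma>0$ and $d\geq 2$. Its error is $C\,\mathcal H^{d-1}(\partial\Omega)\lambda^{\gamma+\frac{d-1}2}$ times a factor that tends to $0$ as $r_{\rm in}(\Omega)\sqrt\lambda\to\infty$. So the restriction $\gamma\geq\frac12$ in Theorem \ref{main} does not come from a missing slicewise bound, and your slicing/Aizenman--Lieb argument targets a step that is already unconditional. What that expansion does require is $r_{\rm in}(\Omega_j)\sqrt{\lambda_j}\to\infty$ for the almost optimizers. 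This is exactly what your ``crude a priori bound'' in Step 2 is supposed to deliver, and it cannot come from a lossy Berezin-type inequality:
\begin{itemize}
\item If the leading constant exceeds $L^{\rm sc}_{\gamma,d}$, comparison with the ball gives nothing, since the discrepancy is already of order $\lambda^{\gamma+\frac d2}$.
\item An inequality with the exact constant $L^{\rm sc}_{\gamma,d}$ plus a boundary term, uniform over convex sets, is Theorem \ref{thm: improved inequality above critical gamma}. It holds only for $\gamma>\gamma^\sharp_d$, and your hypothesis (dimension $d-1$, order $\frac12$) gives no control on $\gamma^\sharp_d$.
\item Non-degeneracy is not a soft fact. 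By Theorem \ref{thm: shape optimization convex}\ref{itm: Shape opt thm sub critical case}, for $\gamma\leq\gamma^\sharp_{d-1}-\frac12$ almost optimizers do collapse to needles with $r_{\rm in}(\Omega_j)\sqrt{\lambda_j}$ bounded. So the hypothesis must be used exactly to exclude this collapse.
\end{itemize}

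In the paper, Theorem \ref{main3} follows at once from Theorem \ref{thm: shape optimization convex}\ref{itm: Shape opt thm super critical}. The hypothesis says $\gamma^\sharp_{d-1}\leq\frac12$, so $\gamma>(\gamma^\sharp_{d-1}-\frac12)_\limplus$ for all $\gamma>0$. Inside that proof, non-degeneracy is shown by contradiction. Suppose $r_{\rm in}(\Omega_j)\sqrt{\lambda_j}$ stays bounded along a subsequence. The John ellipsoid then gives $1\leq m<d$ directions of length $\sim\lambda_j^{-1/2}$. After anisotropic rescaling and Blaschke selection, the partially semiclassical limit expresses $\lim_j \Tr(-\Delta^\sharp_{\Omega_j}-\lambda_j)_\limminus^\gamma/(L^{\rm sc}_{\gamma,d}\lambda_j^{\gamma+\frac d2})$ as an average over $m$-dimensional cross-sections of Riesz means of order $\gamma+\frac{d-m}2$. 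For $m\leq d-2$ this order exceeds $1\geq\gamma^\sharp_m$. For $m=d-1$ it equals $\gamma+\frac12>\frac12\geq\gamma^\sharp_{d-1}$, and this is where your hypothesis enters. The strict inequality allows Theorem \ref{thm: improved inequality above critical gamma} to be applied slicewise in dimension $m$. This makes the limit strictly below $1$ (above $1$ for Neumann). That contradicts Proposition \ref{prop:shapeoptasymp r}, which under the hypothesis gives $M^\sharp_\gamma(\lambda)/(L^{\rm sc}_{\gamma,d}\lambda^{\gamma+\frac d2})\to r^\sharp_{\gamma+\frac12,d-1}=1$. Once $r_{\rm in}(\Omega_j)\sqrt{\lambda_j}\to\infty$ is established, the rest of your Step 2 goes through as you describe.
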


\begin{theorem}\label{main4}
	Let $\sharp\in\{{\rm D},{\rm N}\}$ and let $d\geq 2$. Assume that
	\begin{align*}
		& \Tr(-\Delta_{\Omega}^{\rm D}-\lambda)_\limminus^0 \leq L_{0,d}^{\rm sc} |\Omega|\lambda^{\frac d2} & \text{if}\ \sharp={\rm D} \,, \\
		& \Tr(-\Delta_{\Omega}^{\rm N}-\lambda)_\limminus^0 \geq L_{0,d}^{\rm sc} |\Omega|\lambda^{\frac d2} & \text{if}\ \sharp={\rm N} \,,
	\end{align*}
	for any $\lambda\geq 0$ and $\Omega\in\mathcal C_{d}$. Then the conclusion of Theorem \ref{main2} holds for all $\gamma>0$.
\end{theorem}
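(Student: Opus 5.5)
The plan is to rerun the proof of Theorem \ref{main2} and check that the restriction $\gamma\geq 1$ enters only through one ingredient: a Berezin--Li--Yau (Dirichlet) or Kr\"oger (Neumann) type semiclassical bound for each convex component,
$$
\Tr(-\Delta_\omega^{\rm D}-\mu)_\limminus^\gamma\leq L^{\rm sc}_{\gamma,d}|\omega|\mu^{\gamma+\frac d2},\qquad \Tr(-\Delta_\omega^{\rm N}-\mu)_\limminus^\gamma\geq L^{\rm sc}_{\gamma,d}|\omega|\mu^{\gamma+\frac d2},\qquad \omega\in\mathcal C_d ,
$$
together with a quantitative strict version of it. Under the hypothesis of Theorem \ref{main4}, I will derive both for every $\gamma>0$ by the Aizenman--Lieb identity
$$
\Tr(-\Delta_\omega^\sharp-\lambda)_\limminus^\gamma=\gamma\int_0^\lambda(\lambda-\mu)^{\gamma-1}\,\Tr(-\Delta_\omega^\sharp-\mu)_\limminus^0\,d\mu .
$$
Inserting the assumed P\'olya-type bound for the counting function and integrating, using $\gamma\int_0^\lambda(\lambda-\mu)^{\gamma-1}\mu^{d/2}d\mu\, L^{\rm sc}_{0,d}=L^{\rm sc}_{\gamma,d}\lambda^{\gamma+d/2}$, gives the displayed inequalities for all $\gamma>0$.

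The strict version also comes from the integral representation. In the Dirichlet case the counting function vanishes for $\mu<\lambda_1(\omega)\gtrsim r_{\rm in}(\omega)^{-2}$, where $r_{\rm in}$ is the inradius, so the integrand loses the full amount $L^{\rm sc}_{0,d}|\omega|\mu^{d/2}$ on $[0,\lambda_1)$. This yields
$$
\Tr(-\Delta_\omega^{\rm D}-\lambda)_\limminus^\gamma\leq L^{\rm sc}_{\gamma,d}|\omega|\lambda^{\gamma+\frac d2}\bigl(1-\theta(r_{\rm in}(\omega)\lambda^{1/2})\bigr)
$$
with $\theta>0$ on bounded sets of its argument. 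In the Neumann case the same argument uses that the counting function is at least $1$ for $\mu>0$ while $L^{\rm sc}_{0,d}|\omega|\mu^{d/2}<1$ for small $\mu$; one then obtains an analogous strict lower bound when $\lambda|\omega|^{2/d}$ is bounded. For convex $\omega$ of bounded $r_{\rm in}\lambda^{1/2}$ I will instead rescale and use compactness of the rescaled family.

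With these tools, the proof of Theorem \ref{main2} proceeds as follows. Write $\widetilde\Omega_j=\bigcup_k\omega_{j,k}$; the trace is additive over components. The ball of unit measure gives the comparison
$$
\widetilde M^\sharp_\gamma(\lambda)\gtrless L^{\rm sc}_{\gamma,d}\lambda^{\gamma+\frac d2}\mp\tfrac14L^{\rm sc}_{\gamma,d-1}\Haus^{d-1}(\partial B)\lambda^{\gamma+\frac{d-1}{2}}+o(\lambda^{\gamma+\frac{d-1}2}).
$$
I split the components into three classes and handle each with a different bound.
\begin{itemize}
\item Large components, $r_{\rm in}\lambda^{1/2}\geq R$: use the uniform two-term inequality for convex sets from the convex case (Section~\ref{sec:convex}, \cite{FrankLarson_CPAM26}), whose remainder is $\Haus^{d-1}(\partial\omega)\lambda^{\gamma+\frac{d-1}2}\,o_R(1)$. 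This remainder is valid for all $\gamma>0$ in $d=2$; for $d\ge3$ I expect it also follows from the hypothesis via the same lifting.
\item Components with $r_{\rm in}\lambda^{1/2}<R$: use the strict bound above, which costs a fixed fraction $c_R|\omega|\lambda^{\gamma+d/2}$.
\item All remaining components: the plain semiclassical bound.
\end{itemize}
Summing the three bounds and comparing with the ball, almost optimality forces two things. First, the total measure of small components is $O(\lambda^{-1/2})$, so these components contribute negligibly. Second,
$$
\sum_{\text{large}}\Haus^{d-1}(\partial\omega_{j,k})\leq \Haus^{d-1}(\partial B)+o(1).
$$
Because the large components carry total measure $1-o(1)$, the strict concavity of $V\mapsto V^{(d-1)/d}$ together with the isoperimetric inequality forces a single component of measure $1-o(1)$ whose perimeter is asymptotically minimal. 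A convex set of unit measure and asymptotically minimal perimeter converges in Hausdorff sense, up to translation, to $B$. Finally, the discarded components have total measure $o(1)$ and perimeter $o(1)$, hence diameter $o(1)$. As in the proof of Theorem \ref{main2}, this controls the Hausdorff convergence of the whole union after translation.

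The main obstacle I expect is uniformity in the intermediate regime. I need that $\theta$ stays bounded below and that the two-term remainder is genuinely $o_R(1)$ uniformly over all convex shapes, including very elongated ones with small inradius but large diameter. For the Dirichlet case I would first try to make $\theta$ explicit via $\lambda_1(\omega)\geq \pi^2/(4r_{\rm in}^2)$, which holds for convex $\omega$ (Hersch--Protter), rather than relying on compactness. For the Neumann case the analogous gain is subtler for elongated components; there I would use the one-dimensional reduction in the long direction, as in the convex-set analysis.
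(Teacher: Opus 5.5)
Your Dirichlet half is sound, but the Neumann half has a genuine gap. Everything there rests on a uniform strict Kr\"oger bound
\[
\Tr(-\Delta^{\rm N}_\omega-\lambda)_\limminus^\gamma\geq (1+c_R)\,L^{\rm sc}_{\gamma,d}|\omega|\lambda^{\gamma+\frac d2}\qquad\text{for all }\omega\in\mathcal C_d\text{ with }r_{\rm in}(\omega)\sqrt\lambda<R\,,
\]
and you do not prove it. Using $\Tr(-\Delta^{\rm N}_\omega-\mu)^0_\limminus\geq 1$ improves the Aizenman--Lieb integral by at most $\gamma\int_0^\lambda(\lambda-\mu)^{\gamma-1}\,d\mu=\lambda^\gamma$. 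That is negligible compared with $L^{\rm sc}_{\gamma,d}|\omega|\lambda^{\gamma+\frac d2}$ once $\lambda^{d/2}|\omega|\to\infty$. This is exactly what happens for thin elongated components such as $(0,\lambda^{-1/2})^{d-1}\times(0,\ell)$ with $\ell\sqrt\lambda\to\infty$.

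Your fallback, ``rescale and use compactness'', also fails. After rescaling to $\lambda=1$ the class $\{r_{\rm in}<R\}$ has unbounded diameters, so Blaschke's theorem gives nothing. Along such sequences any gain comes from the cross-sections, i.e.\ from a partially semiclassical analysis in dimension $m<d$ with exponent $\gamma+\frac{d-m}{2}$. That is the machinery behind Theorems~\ref{thm: improved inequality above critical gamma} and~\ref{thm: shape optimization convex}.

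The step cannot be skipped. With only the plain Kr\"oger bound on thin components, your inequalities are compatible with a ball of measure $1-m$ plus thin components of total measure $m>0$. So neither ``thin components carry measure $O(\lambda^{-1/2})$'' nor the single-ball conclusion follows.

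The missing estimate is in fact available. The hypothesis of Theorem~\ref{main4} says precisely that $\gamma_d^\sharp=0$, so Theorem~\ref{thm: improved inequality above critical gamma}(b) applies for every $\gamma>0$. Combined with $\Haus^{d-1}(\partial\omega)\geq|\omega|/r_{\rm in}(\omega)$ from \eqref{eq: inradius bound}, it gives your bound with $c_R=c_{\gamma,d}/(L^{\rm sc}_{\gamma,d}R)$. The rest of your argument then runs as in the Dirichlet case.

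The paper is shorter still: it notes $\gamma>\gamma_d^\sharp=0$ and invokes part (a) of Theorem~\ref{thm: shape opt multicomponent}. That proof uses Theorem~\ref{thm: improved inequality above critical gamma} to bound the perimeters of the components, and Theorem~\ref{thm: shape optimization convex}(a) for components of large volume.

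By comparison, your Dirichlet argument is a genuinely more elementary route. Hersch--Protter gives the fixed-fraction loss uniformly, elongated sets included, and Theorem~\ref{thm: Weyl asymptotics convex} handles $r_{\rm in}\sqrt\lambda\geq R$. Note that this theorem holds unconditionally for all $d\geq 2$ and $\gamma>0$, so no ``lifting'' is needed there.

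Finally, your last step overclaims. Components can be translated freely without changing the spectrum, so the union itself need not converge in Hausdorff distance. What one proves, and what the paper proves, is that the largest component converges to $B$ while the remaining components carry vanishing total measure.
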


We emphasize that the assumption in Theorem \ref{main4} is rather strong. It posits the validity of P\'olya's conjecture in the class of convex sets. By considering sets $\Omega=\omega\times(-\ell/2,\ell/2)$ with $\ell\gg\lambda^{-1/2}$ it is not hard to see that the assumption in Theorem~\ref{main4} implies the assumption in Theorem \ref{main3}.

In Sections \ref{sec:convex} and \ref{sec:multicomp} we will see that the assumptions in Theorems \ref{main3} and \ref{main4} are also necessary for the conclusion to hold. Thus, in some sense giving an affirmative answer to the question raised at the beginning of the introduction, namely showing convergence of optimizers of the $\widetilde M_{\gamma}^{\sharp}(\lambda)$-problem to balls, is as hard as proving P\'olya's conjecture for convex sets.

\subsection*{Bibliographic remarks}

Spectral shape optimization problems have a long and venerable history, with Rayleigh's conjecture and its proof by Faber and Krahn being an early highlight. We refer to~\cite{Henrot_17} and references therein for a review. Asymptotic questions similar to those studied in our paper have seen a surge of interest in the last decade, including \cite{AntunesFreitas_13,BucurFreitas_13, ColboisElSoufi_14,vdBerg_15, vdBergBucurGittins_16, vdBergGittins_17, GittinsLarson_17, Freitas_17, LarsonAFM, Lagace20, BuosoFreitas_20,Freitas_etal_21,Filonov_etal_Polya,BaurLarson_26}. Most of these results concern the case of optimizing individual eigenvalues (or the corresponding eigenvalue counting function). The paper \cite{LarsonJST} seems to have been among the first to study Riesz means.


\section{The critical Riesz exponent}

The semiclassical constant for $\gamma\geq 0$ and $d\in\N$ is defined by
\begin{equation}\label{eq: semiclassical constant}
	L_{\gamma, d}^{\rm sc} := \frac{\Gamma(1+\gamma)}{(4\pi)^{\frac{d}2}\Gamma(1+\gamma + \frac{d}{2})}\,.
\end{equation}
This constant appears in the Weyl asymptotics
\begin{equation}
	\label{eq:weyl}
	\lim_{\lambda\to\infty} \lambda^{-\gamma-\frac d2} \tr(-\Delta_\Omega^\sharp - \lambda)_\limminus^\gamma = L_{\gamma,d}^{\rm sc} \, |\Omega| \,.
\end{equation}
These asymptotics are valid for any open set of finite measure when $\sharp ={\rm D}$ and for bounded open sets with the extension property when $\sharp={\rm N}$; see \cite[Corollaries 3.17 and 3.21]{FrankLaptevWeidl} and references therein.

Let us introduce the excess factors
\begin{align*}
	r_{\gamma,d}^{\rm D} &:= \sup\biggl\{\frac{\Tr(-\Delta_\Omega^{\rm D}-\lambda)_\limminus^\gamma}{L_{\gamma,d}^{\rm sc}|\Omega|\lambda^{\gamma+\frac{d}2}}: \Omega \in \mathcal{C}_d, \lambda > 0\biggr\}\,,\\
	r_{\gamma,d}^{\rm N} &:= \inf\biggl\{\frac{\Tr(-\Delta_\Omega^{\rm N}-\lambda)_\limminus^\gamma}{L_{\gamma,d}^{\rm sc}|\Omega|\lambda^{\gamma+\frac{d}2}}: \Omega \in \mathcal{C}_d, \lambda > 0\biggr\}\,.
\end{align*}
That is, $r_{\gamma, d}^{\sharp}$ are 
the optimal constants $r_{\gamma,d}^\sharp$ so that, for all $\Omega\in \mathcal{C}_d$ and all $\lambda\geq 0$,
\begin{equation*}
	\Tr(-\Delta_\Omega^{\rm D}-\lambda)_\limminus^\gamma  \leq r_{\gamma,d}^{\rm D}L_{\gamma, d}^{\rm sc}|\Omega|\lambda^{\gamma+\frac{d}2}\quad  \mbox{and} \quad \Tr(-\Delta_\Omega^{\rm N}-\lambda)_\limminus^\gamma  \geq r_{\gamma,d}^{\rm N}L_{\gamma, d}^{\rm sc}|\Omega|\lambda^{\gamma+\frac{d}2}\,.
\end{equation*}
Let us summarize some facts about these constants. All these facts are valid also for the sharp constants in the corresponding inequalities without the convexity assumption on the underlying domain.
\begin{enumerate}[label=\textup{(}\hspace{-0.3pt}\alph*\textup{)}]
	\item $r_{\gamma,d}^{\rm D} <\infty$ and $r_{\gamma,d}^{\rm N}>0$; this is well known, see \cite[Corollaries 3.30 and 3.39]{FrankLaptevWeidl}.
	\item $r_{\gamma, d}^{\rm D}\geq 1 \geq r_{\gamma,d}^{\rm N}$; this is a consequence of Weyl's law \eqref{eq:weyl}.
	\item\label{itm: r properties monotonicity} $\gamma\mapsto r_{\gamma,d}^\sharp$ is non-increasing (for $\sharp=$ D) and non-decreasing (for $\sharp=$ N); this is a consequence of the Aizenman--Lieb argument; see e.g. \cite[Corollary 3.29]{FrankLaptevWeidl}.
	\item $\gamma\mapsto r_{\gamma,d}^\sharp$ is continuous; this follows from \cite[Lemmas 3.3 and 3.4]{FrankLarson_CPAM26}.
	\item $r_{\gamma, d}^{\rm D} = 1 = r_{\gamma,d}^{\rm N}$ for $\gamma\geq 1$; this is a consequence of the Berezin--Li--Yau and Kr\"oger inequalities \cite[Theorems 3.25 and 3.37]{FrankLaptevWeidl} and the monotonicity in \ref{itm: r properties monotonicity}.
\end{enumerate}

Define the critical exponents
\begin{equation*}
	\gamma_d^{\rm D} :=\inf\Bigl\{\gamma\geq 0: \Tr(-\Delta_\Omega^{\rm D}-\lambda)_\limminus^\gamma \leq L^{\rm sc}_{\gamma,d}|\Omega|\lambda^{\gamma+\frac{d}2} \mbox{ for all }\Omega \in \mathcal{C}_d, \lambda \geq 0\Bigr\} \,\,
\end{equation*}
and
\begin{equation*}
	\gamma_d^{\rm N} :=\inf\Bigl\{\gamma\geq 0: \Tr(-\Delta_\Omega^{\rm N}-\lambda)_\limminus^\gamma \geq L^{\rm sc}_{\gamma,d}|\Omega|\lambda^{\gamma+\frac{d}2} \mbox{ for all }\Omega \in \mathcal{C}_d, \lambda \geq 0\Bigr\} \,.
\end{equation*}
The exponent $\gamma_d^\sharp$ can be characterized as the smallest number so that if $\gamma \geq \gamma_d^{\rm D}$, then
\begin{equation*}
	\Tr(-\Delta_\Omega^{\rm D}-\lambda)_\limminus^\gamma \leq L_{\gamma,d}^{\rm sc} |\Omega| \lambda^{\gamma+\frac{d}2}\,.
\end{equation*}
for all bounded, open, and convex $\Omega \subset \R^d$ and $\lambda \geq 0$.
If $\gamma \geq \gamma_d^{\rm N}$ then, similarly,
\begin{equation*}
	\Tr(-\Delta_\Omega^{\rm N}-\lambda)_\limminus^\gamma \geq L_{\gamma,d}^{\rm sc} |\Omega| \lambda^{\gamma+\frac{d}2}
\end{equation*}
for all bounded, open, and convex $\Omega \subset \R^d$ and $\lambda \geq 0$. In other words, the critical exponent $\gamma_d^\sharp$ can be characterized as the unique smallest number so that
$$
r_{\gamma,d}^\sharp =1
\qquad\text{for all}\
\gamma\geq \gamma^\sharp_d \,.
$$

The validity of P\'olya's conjecture for convex sets is equivalent to $\gamma_d^\sharp =0$. In particular, since P\'olya's conjecture holds in dimension $d=1$, we have
$$
\gamma_1^\sharp = 0 \,.
$$
The following is one of the main results in \cite{FrankLarson_CPAM26}.

\begin{theorem}\label{smallerthanone}
	For any $d$ and $\sharp\in\{{\rm D},{\rm N}\}$, we have
	$$
	\gamma_d^\sharp < 1\,.
	$$
\end{theorem}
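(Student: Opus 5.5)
We have to show that for some $\gamma<1$ the Berezin/Kröger-type inequality holds with constant exactly one for every convex set and every $\lambda$, that is, $r^\sharp_{\gamma,d}=1$. The plan is a compactness argument that splits the pair $(\Omega,\lambda)$ according to the scale-invariant size $\lambda r_{\rm in}(\Omega)^2$, where $r_{\rm in}$ is the inradius. By scaling we normalize $\lambda=1$. We treat Dirichlet; Neumann is analogous with reversed inequalities.

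\emph{Step 1: the case $\gamma=1$ with a gain.} For $\gamma=1$ there are refined Berezin--Li--Yau (resp.\ Kröger) inequalities with a negative (resp.\ positive) boundary term. For convex $\Omega$ such a bound reads
$$\Tr(-\Delta_\Omega^{\rm D}-\lambda)_\limminus \le L^{\rm sc}_{1,d}|\Omega|\lambda^{1+\frac d2}-c\,\mathcal H^{d-1}(\partial\Omega)\lambda^{1+\frac{d-1}{2}}$$
whenever $\lambda r_{\rm in}^2\ge C$. This kind of bound is available via the Geisinger--Laptev--Weidl/Harrell--Stubbe or Frank--Larson approach. For convex sets $|\Omega|\lesssim r_{\rm in}\mathcal H^{d-1}(\partial\Omega)$, so the correction is a fixed fraction of the main term once $\lambda r_{\rm in}^2$ is bounded. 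Uniform two-term asymptotics handle the regime $\lambda r_{\rm in}^2\to\infty$.

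\emph{Step 2: strict inequality and a uniform gap.} Combining Step 1 with $r^{\rm D}_{1,d}=1$, I would show
$$\Tr(-\Delta_\Omega^{\rm D}-\lambda)_\limminus^{1}\le (1-\delta)\,L^{\rm sc}_{1,d}|\Omega|\lambda^{1+\frac d2}$$
uniformly for $\lambda r_{\rm in}^2$ in any compact range $[a,b]$. For small $\lambda r_{\rm in}^2$ this is where the argument must be careful: thin convex sets are close to lower-dimensional products. Here the plan is to bound the trace by slicing, i.e.\ the Laptev lifting argument, which gives $\Tr(\cdot)^\gamma_\limminus$ in $d$ dimensions in terms of $(d-1)$-dimensional traces of order $\gamma+\frac12$. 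The one-dimensional Dirichlet interval satisfies the sharp bound with strict inequality for exponents above $0$. By induction on dimension (using $\gamma_{d-1}^\sharp<1$) the thin regime then also has a strict gap.

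\emph{Step 3: from $\gamma=1$ to $\gamma<1$.} Using the continuity of $\gamma\mapsto r^\sharp_{\gamma,d}$ (fact (d)), which rests on the same uniform estimates, the gap at $\gamma=1$ is stable. For $\gamma\in[1-\epsilon,1]$, write the order-$\gamma$ Riesz mean via the Aizenman--Lieb identity as an average over $\mu$ of order-$\gamma$ means at a smaller spectral parameter. Alternatively, interpolate directly: the ratio $\Tr(\cdot)^\gamma_\limminus/(L^{\rm sc}_{\gamma,d}|\Omega|\lambda^{\gamma+d/2})$ is continuous in $\gamma$ uniformly on compact ranges of $\lambda r_{\rm in}^2$, because compactness of convex sets modulo scaling (Blaschke selection, with Hausdorff continuity of eigenvalues) applies there. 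On the noncompact ranges, the two-term bound with explicit negative boundary term from Step 1 persists for $\gamma$ near $1$, since the Weyl remainder estimates are uniform in $\gamma$ near $1$. Hence $r^{\rm D}_{\gamma,d}=1$ for $\gamma$ near $1$.

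\emph{Main obstacle.} The hardest part is the uniformity in the degenerate regimes: $\lambda r_{\rm in}^2\to\infty$, which needs a two-term bound with a remainder uniform over convex sets and stable in $\gamma$ slightly below $1$, and collapsing sets $\lambda r_{\rm in}^2\to0$. For the former I would first try the Harrell--Stubbe/Larson refined Berezin bound for convex domains combined with Aizenman--Lieb. For the latter, the lifting argument with induction on $d$.
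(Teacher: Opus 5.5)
Your architecture matches the paper's. You split according to $\lambda r_{\rm in}(\Omega)^2$, use the uniform two-term asymptotics (valid for all $\gamma\ge\gamma_0$) when this quantity is large, use an order-one inequality with a strict gain when it is bounded, and then lower the exponent below $1$. The gap is in the last step, which is the heart of the theorem, and none of your three mechanisms works.

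\emph{Continuity.} Continuity of $\gamma\mapsto r^\sharp_{\gamma,d}$ only gives $r^\sharp_{\gamma,d}\to 1$ as $\gamma\uparrow 1$, which is equally consistent with $\gamma_d^\sharp=1$.

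\emph{Aizenman--Lieb.} The Aizenman--Lieb identity writes a Riesz mean of order $\gamma$ as an average of means of \emph{lower} order. It therefore transports inequalities upward in $\gamma$ (this is fact (c)), never from $\gamma=1$ down.

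\emph{Compactness.} This claim is false. With $\lambda=1$ and $r_{\rm in}(\Omega)^2\in[a,b]$, the sets $(0,2\sqrt a)^{d-1}\times(0,L)$ with $L\to\infty$ have fixed inradius and unbounded diameter, so Blaschke selection does not apply. This is exactly why partially semiclassical limits are needed in Section~\ref{sec:convex}.

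The missing idea is a pointwise comparison with the order-one mean at a slightly \emph{larger} spectral parameter. For $0<\gamma<1$, $\tau>0$, $E\ge 0$,
\[
(\lambda-E)_+^\gamma\le \gamma^\gamma(1-\gamma)^{1-\gamma}(\tau\lambda)^{\gamma-1}\bigl((1+\tau)\lambda-E\bigr)_+ \,.
\]
Suppose $\Tr(-\Delta^{\rm D}_\Omega-\mu)_\limminus\le c\,L^{\rm sc}_{1,d}|\Omega|\mu^{1+\frac d2}$ with $c<1$ for all convex $\Omega$ and all $\mu\le 2B\,r_{\rm in}(\Omega)^{-2}$. Choose $\tau\le 1$ with $(1+\tau)^{1+\frac d2}c<1$, then take $\gamma$ close to $1$. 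This gives the order-$\gamma$ inequality for all $\lambda\le B\,r_{\rm in}(\Omega)^{-2}$, uniformly over convex sets and without any compactness. For Neumann, $(\lambda-E)_+^\gamma\ge\lambda^{\gamma-1}(\lambda-E)_+$ for $0\le E$, together with a gain $C_B>1$, suffices.

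The second gap is in the input. The order-one gain must hold uniformly over \emph{all} convex sets, however elongated, and for \emph{every} $\lambda\le B\,r_{\rm in}^{-2}$. This is what the improved Berezin--Li--Yau/Kr\"oger inequalities with $c_B<1<C_B$, quoted in the paper, provide. Your Step~1 gives it only for $\lambda r_{\rm in}^2\ge C$, and the patch in Step~2 does not work.

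A slicing inequality with the sharp constant $L^{\rm sc}_{\gamma,1}$ at order $\gamma\approx 1$ is not available for non-product domains. The Laptev--Weidl lifting rests on the operator-valued Lieb--Thirring inequality, which has the semiclassical constant only for $\gamma\ge\frac32$. Such a lifting also gives nothing for the Neumann lower bound, and the induction on $\gamma^\sharp_{d-1}<1$ is never actually needed. For Dirichlet, the range $\lambda r_{\rm in}^2<\pi^2/4$ is trivial since $\lambda_1(\Omega)\ge\pi^2/(4r_{\rm in}^2)$ for convex $\Omega$. The range from there up to $C$, and the small range for Neumann, are not covered.

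Your own remark in Step~1 points to the fix. Obtain a bound with boundary term $\mp c\,\Haus^{d-1}(\partial\Omega)\lambda^{1+\frac{d-1}2}$ (with a positive part in the Dirichlet case) on the whole range $\lambda\le B\,r_{\rm in}^{-2}$. The inequality $\Haus^{d-1}(\partial\Omega)\ge|\Omega|/r_{\rm in}(\Omega)$ then immediately yields $c_B<1<C_B$, with no slicing, compactness, or induction.
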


Let us briefly describe some of the ideas that go into the proof of this theorem. We need to prove that there is a $\gamma<1$ such that, for all $\Omega\in\mathcal C_d$ and $\lambda\geq 0$,
$$
\Tr(-\Delta_\Omega^{\rm D}-\lambda)_\limminus^\gamma \leq L_{\gamma,d}^{\rm sc} |\Omega| \lambda^{\gamma+\frac{d}2} \leq \Tr(-\Delta_\Omega^{\rm N}-\lambda)_\limminus^\gamma\,.
$$
The \emph{uniform semiclassics} from \cite{FrankLarson_24} (see also Theorem \ref{thm: Weyl asymptotics convex} below) imply that for each $\gamma_0>0$ there is a constant $B_0$ (depending only on $d$ and $\gamma_0$) such that the desired inequality holds for every $\gamma\geq\gamma_0$ and $\lambda \geq B_0\, r_{\rm in}(\Omega)^{-2}$. Here, $r_{\rm in}(\Omega)$ denotes the inradius of a set $\Omega \subset \R^d$, that is, the radius of the largest ball contained in $\Omega$,
\begin{equation*}
	r_{\rm in}(\Omega) := \sup_{x\in \Omega}\dist(x, \Omega^c)\,.
\end{equation*}

Meanwhile, \emph{improved Berezin--Li--Yau/Kr\"oger inequalities} (see also \cite{FrankLarsonPfeiffer}) show that for every $B>0$ there are $c_B<1<C_B$ such that for all $\Omega\in\mathcal C_d$ and $\lambda \leq B\, r_{\rm in}(\Omega)^{-2}$
$$
\Tr(-\Delta_\Omega^{\rm D}-\lambda)_\limminus \leq c_B L_{1,d}^{\rm sc} |\Omega| \lambda^{1+\frac{d}2} < C_B L_{1,d}^{\rm sc} |\Omega| \lambda^{\gamma+\frac{d}2} \leq \Tr(-\Delta_\Omega^{\rm N}-\lambda)_\limminus\,.
$$
From the latter inequality one can deduce that for every $B>0$ there is a $\gamma_B<1$ such that for all $\gamma\geq\gamma_B$, for all $\Omega\in\mathcal C_d$ and $\lambda \geq B\, r_{\rm in}(\Omega)^{-2}$ one has the desired inequality. Combining these two facts, we obtain Theorem \ref{smallerthanone}.


\section{The optimization problem for convex sets}\label{sec:convex}

In this section we turn our attention to the shape optimization problems $M^{\rm D}_\gamma(\lambda)$ and $M^{\rm N}_\gamma(\lambda)$, introduced in \eqref{eq:defopt}. More precisely, for fixed $\gamma\geq 0$, we shall be interested in the numbers $M^\sharp_\gamma(\lambda)$ and the corresponding optimizers in the limit $\lambda\to\infty$.

Our first result concerns the asymptotics of $M_\gamma^\sharp(\lambda)$ as $\lambda\to\infty$. We find it remarkable that it is the critical Riesz exponent in dimension $d-1$ that is relevant for the shape optimization problem in dimension $d$.

\begin{proposition}\label{prop:shapeoptasymp r}
	Let $d\geq 2$, $\gamma\geq 0$ and $\sharp \in \{{\rm D}, {\rm N}\}$. Then
	\begin{equation*}
		\lim_{\lambda \to \infty}\frac{M_{\gamma}^{\sharp}(\lambda)}{L_{\gamma,d}^{\rm sc}\lambda^{\gamma+\frac{d}2}} = r_{\gamma+\frac12,d-1}^\sharp \,.
	\end{equation*}
\end{proposition}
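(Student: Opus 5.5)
A lower bound (for $\sharp={\rm D}$; the Neumann case is symmetric with inequalities reversed) comes from thin cylinders, and a matching upper bound from slicing an arbitrary convex set in a well-chosen direction. Together they give the limit $r_{\gamma+\frac12,d-1}^\sharp$.

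\emph{Lower bound.} Fix $\omega\in\mathcal C_{d-1}$ and $\mu>0$ with
$$
\Tr(-\Delta^{\rm D}_\omega-\mu)_\limminus^{\gamma+1/2}\geq (r^{\rm D}_{\gamma+\frac12,d-1}-\epsilon)L^{\rm sc}_{\gamma+\frac12,d-1}|\omega|\mu^{\gamma+\frac12+\frac{d-1}2}.
$$
By scaling we may take $\mu=\lambda$. Consider $\Omega_\ell=\omega_\lambda\times(0,\ell)$, with $\ell\gg\lambda^{-1/2}$ and the volume normalized to one. Here $\omega_\lambda$ is the rescaled cross-section, of diameter $\sim\lambda^{-1/2}$, so $\Omega_\ell$ is a long thin cylinder. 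Separation of variables gives
$$
\Tr(-\Delta_{\Omega_\ell}-\lambda)_\limminus^\gamma=\sum_k \Tr\bigl(-\Delta_{(0,\ell)}-(\lambda-\mu_k)\bigr)_\limminus^\gamma .
$$
For each fixed $\lambda-\mu_k>0$, the one-dimensional Weyl law with $\ell\to\infty$ (equivalently, uniform one-dimensional semiclassics $L^{\rm sc}_{\gamma,1}\ell\,t^{\gamma+1/2}+O(t^\gamma)$) turns the sum into
$$
L^{\rm sc}_{\gamma,1}\,\ell\,\Tr(-\Delta_{\omega_\lambda}-\lambda)_\limminus^{\gamma+1/2}+\text{error}.
$$
The error is of relative size $O((\ell\lambda^{1/2})^{-1})$. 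The identity $L^{\rm sc}_{\gamma,1}L^{\rm sc}_{\gamma+1/2,d-1}=L^{\rm sc}_{\gamma,d}$ then yields
$$
\liminf_{\lambda\to\infty}\frac{M^{\rm D}_\gamma(\lambda)}{L^{\rm sc}_{\gamma,d}\lambda^{\gamma+d/2}}\geq r^{\rm D}_{\gamma+\frac12,d-1}-\epsilon .
$$
For $\gamma=0$ one uses instead the one-sided bound $\#\{n:\pi^2n^2/\ell^2<t\}\geq \ell\sqrt t/\pi-1$.

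\emph{Upper bound.} Let $\Omega\in\mathcal C_d$ with $|\Omega|=1$, and consider two regimes. If $r_{\rm in}(\Omega)^2\lambda\geq B$ with $B$ large, the uniform semiclassics (Theorem~\ref{thm: Weyl asymptotics convex} from \cite{FrankLarson_24}) give
$$
\Tr(-\Delta^{\rm D}_\Omega-\lambda)_\limminus^\gamma\leq L^{\rm sc}_{\gamma,d}\lambda^{\gamma+d/2}\bigl(1+o_B(1)\bigr),
$$
and this is at most $r^{\rm D}_{\gamma+\frac12,d-1}$ up to $o(1)$, since $r\geq1$.

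If $r_{\rm in}^2\lambda\leq B$, the set is thin in some direction but, having unit volume, is long in another. I would use the Laptev-type operator-valued lifting argument: write $\Omega$ as a family of one-dimensional slices in a direction $e$, apply the sharp one-dimensional inequality (here $r^{\rm D}_{\gamma',1}=1$ for $\gamma'\geq 0$, by the one-dimensional P\'olya inequality for intervals) fibrewise, and reduce to $(d-1)$-dimensional cross-sections with exponent $\gamma+\frac12$. Choosing the slicing direction so that the Dirichlet/Neumann comparison goes the right way, Dirichlet bracketing (Neumann: reverse bracketing) gives roughly
$$
\Tr(-\Delta_\Omega-\lambda)_\limminus^\gamma\leq L^{\rm sc}_{\gamma,1}\int \Tr(-\Delta_{\omega_t}-\lambda)_\limminus^{\gamma+1/2}\,dt ,
$$
where $\omega_t$ are the cross-sections orthogonal to a chosen axis. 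This step needs convexity, e.g.\ via John's ellipsoid, to compare $\Omega$ with a product-like region. The definition of $r_{\gamma+\frac12,d-1}$ then bounds the integrand, and integrating $|\omega_t|$ gives $|\Omega|=1$.

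\emph{Main obstacle.} The inequality above is not literally true: the operator-valued Lieb--Thirring/Laptev argument only works cleanly for $\gamma\geq1$-type exponents, or for products. I expect the hardest step to be the thin regime for small $\gamma$, where one must show the error is $o(\lambda^{\gamma+d/2})$ uniformly. My first attempt would be a compactness argument. Take a near-maximizing sequence with $r_{\rm in}^2\lambda$ bounded and rescale by $\lambda^{1/2}$. By convexity, the sets (after an affine normalization) are long in at least one direction, and after Blaschke selection converge locally to a cylinder $\omega\times\R^k$ (possibly $k\geq1$ with lower-dimensional cross-section, handled via monotonicity of $r$ in the dimension reduction). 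Continuity of $r_{\gamma,d}^\sharp$ in $\gamma$, fact (d), would then absorb the $\epsilon$-losses.
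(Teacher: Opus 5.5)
\textbf{Lower bound.} Your thin-cylinder construction is correct. The one-dimensional sum-versus-integral error is at most $(\lambda-\mu_k)^\gamma\le\lambda^\gamma$ per transverse eigenvalue $\mu_k<\lambda$, and there is a fixed finite number of these. So the total error is $O(\lambda^\gamma)$ against a main term of order $\lambda^{\gamma+\frac d2}$. Together with $L^{\rm sc}_{\gamma,1}L^{\rm sc}_{\gamma+\frac12,d-1}=L^{\rm sc}_{\gamma,d}$ this gives the lower bound for ${\rm D}$ (upper bound for ${\rm N}$).

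\textbf{Thick regime.} This part is also fine. Theorem~\ref{thm: Weyl asymptotics convex} needs $\gamma>0$, but for $\gamma=0$ the one-term bound you use follows from the $\gamma>0$ case by comparing Riesz means.

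\textbf{Thin regime: the gap.} The genuine gap is the regime $r_{\rm in}(\Omega)^2\lambda\le B$, which is where the value $r^\sharp_{\gamma+\frac12,d-1}$ comes from. Your slicing inequality would follow from the operator-valued Lieb--Thirring inequality with semiclassical constant. That inequality holds only for $\gamma\ge\frac32$, and in that range Berezin--Li--Yau already gives $1=r^{\rm D}_{\gamma+\frac12,d-1}$. For $\gamma<1$, where the proposition actually has content, you have no argument.

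\textbf{Why the compactness fallback does not close it.} After rescaling by $\sqrt\lambda$ and normalizing with John's ellipsoid, the sets need not converge to a cylinder $\omega\times\R^k$. Convexity only forces the cross-sections to vary slowly on the unit scale. On the scale of the long directions they change by order one and degenerate at the ends; think of a thin ellipsoid of revolution with $d-1$ semi-axes of size $\lambda^{-1/2}$. Local convergence to cylinders near each point does not control the trace, which is a global quantity, so the dimension-monotonicity you invoke cannot yet be applied.

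\textbf{Missing ingredient.} What is needed is a partially semiclassical Weyl law with varying fibres. In terms of the John scales $\ell_j^{(1)}\le\dots\le\ell_j^{(d)}$, suppose $\sqrt{\lambda_j}\,\ell_j^{(m)}$ has a positive finite limit and $\sqrt{\lambda_j}\,\ell_j^{(m+1)}\to\infty$. Then along a subsequence
\[
\lim_{j\to\infty}\frac{\Tr(-\Delta^\sharp_{\Omega_j}-\lambda_j)_\limminus^\gamma}{L^{\rm sc}_{\gamma,d}\lambda_j^{\gamma+\frac d2}}=\frac{1}{L^{\rm sc}_{\gamma+\frac{d-m}{2},m}|\Omega_*|}\int_{P^\perp\Omega_*}\Tr(-\Delta^\sharp_{\Omega_*(y)}-1)_\limminus^{\gamma+\frac{d-m}{2}}\,dy ,
\]
with $\Omega_*$ an anisotropically rescaled Blaschke limit. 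This is \cite[Theorem 1.8]{FrankLarson_CPAM26}, quoted in the sketch of Theorem~\ref{thm: shape optimization convex}. Proving it requires a genuine localization argument in the long variables, with error control uniform along the fibres, including where they degenerate.

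\textbf{How the proof would then close.} Given that formula, the upper bound takes two lines and needs no continuity in $\gamma$. Bound each fibre by $r^\sharp_{\gamma+\frac{d-m}2,m}L^{\rm sc}_{\gamma+\frac{d-m}2,m}|\Omega_*(y)|$ and integrate by Fubini. Then use $r^{\rm D}_{\gamma+\frac{d-m}2,m}\le r^{\rm D}_{\gamma+\frac12,d-1}$ (reversed for ${\rm N}$). This follows by iterating your own cylinder construction, since that construction proves $r^{\rm D}_{\gamma',k+1}\ge r^{\rm D}_{\gamma'+\frac12,k}$ for all $\gamma'\ge0$ and $k\ge1$.
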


In particular, we have, if $\gamma\geq \gamma_{d-1}^\sharp-\frac12$, then
\begin{equation*}
	\lim_{\lambda \to \infty}\frac{M_{\gamma}^{\sharp}(\lambda)}{L_{\gamma,d}^{\rm sc}\lambda^{\gamma+\frac{d}2}} =1 \,,
\end{equation*}
while if $\gamma < \gamma_{d-1}^\sharp -\frac12$, then
\begin{equation*}
	\begin{aligned}
		\lim_{\lambda \to \infty}\frac{M_{\gamma}^{\sharp}(\lambda)}{L_{\gamma,d}^{\rm sc}\lambda^{\gamma+\frac{d}2}}>1 \quad  \mbox{if }\sharp ={\rm D} \quad \mbox{and}\quad
		\lim_{\lambda \to \infty}\frac{M_{\gamma}^{\sharp}(\lambda)}{L_{\gamma,d}^{\rm sc}\lambda^{\gamma+\frac{d}2}}<1\quad  \mbox{if }\sharp ={\rm N}\,.
	\end{aligned}
\end{equation*}

Now we turn to the finer question of describing the asymptotic behavior of sets realizing the extremum in $M^\sharp_\gamma(\lambda)$ as $\lambda\to\infty$ or, more generally, of sets almost realizing the extremum. For the sake of simplicity we restrict ourselves to the case $\gamma>0$ and refer to \cite{FrankLarson_CPAM26} for the case $\gamma=0$.

To formulate our results we need to introduce some terminology. For bounded $\Omega, \Omega' \in \mathcal{C}_d$ the (complementary) Hausdorff distance between these sets is defined by
\begin{equation*}
	d^{H}(\Omega, \Omega') := \max\Bigl\{\sup_{x \in K\setminus\Omega}\dist(x, K\setminus\Omega'), \sup_{x \in K\setminus\Omega'}\dist(x, K\setminus\Omega)\Bigr\}\,,
\end{equation*}
where $K\subset \R^d$ is a compact set with $\Omega, \Omega' \subset K$. The definition is independent of the choice of $K$. For basic properties concerning the Hausdorff distance we refer to \cite{HenrotPierre_18}.

The following theorem describes the asymptotic behavior of (almost) optimizers to the problems $M_\gamma^\sharp(\lambda)$ as $\lambda\to\infty$.

\begin{theorem}\label{thm: shape optimization convex}
	Let $d\geq 2$, $\gamma> 0$ and $\sharp \in \{{\rm D}, {\rm N}\}$. Let $\{\lambda_j\}_{j\geq 1}\subset (0, \infty)$ be a sequence with $\lim_{j\to \infty}\lambda_j = \infty$ and let $\{\Omega_j\}_{j\geq 1} \subset\mathcal C_d$ be a sequence with $|\Omega_j|=1$ satisfying
	\begin{equation*}
		\lim_{j \to \infty} \frac{\Tr(-\Delta^{\sharp}_{\Omega_j}-\lambda_j)_\limminus^\gamma-M_\gamma^{\sharp}(\lambda_j)}{\lambda_j^{\gamma + \frac{d-1}{2}}} = 0\,.
	\end{equation*}
	\begin{enumerate}[label=\textup{(}\hspace{-0.3pt}\alph*\textup{)}]
		\item\label{itm: Shape opt thm super critical} If $\gamma >(\gamma_{d-1}^{\sharp}-\tfrac{1}2)_\limplus$, then the sequence $\{\Omega_j\}_{j\geq 1}$ converges, up to translations, with respect to the Hausdorff distance to a ball $B\subset\R^d$ with $|B|=1$ and
		\begin{equation*}
			M_\gamma^{\sharp}(\lambda) = \Tr(-\Delta_B^{\sharp}-\lambda)_\limminus^\gamma + o(\lambda^{\gamma+ \frac{d-1}2})
			\qquad\text{as}\ \lambda\to\infty \,.
		\end{equation*}
		\item\label{itm: Shape opt thm sub critical case} If $\gamma \leq  \gamma_{d-1}^{\sharp}-\frac12$, then
		\begin{equation*}
			\limsup_{j\to \infty}r_{\rm in}(\Omega_j)\sqrt{\lambda_j}<\infty
		\end{equation*}
		and there is a constant $C>0$ such that after appropriate translations and rotations
		\begin{equation*}
			\Omega_j \subset \{x = (x', x_d) \in \R^d: |x'| < C \, r_{\rm in}(\Omega_j)\} \quad \mbox{for all }j \geq 1\,.
		\end{equation*}
	\end{enumerate}
\end{theorem}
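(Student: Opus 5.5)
The plan is to organize everything around a dichotomy according to the size of $\Lambda_j:=r_{\rm in}(\Omega_j)\sqrt{\lambda_j}$. The main tool will be the uniform two-term semiclassics for convex sets from \cite{FrankLarson_24}, referred to above as Theorem~\ref{thm: Weyl asymptotics convex}. For $\gamma>0$ it gives
$$
\Tr(-\Delta^\sharp_\Omega-\lambda)_\limminus^\gamma = L^{\rm sc}_{\gamma,d}|\Omega|\lambda^{\gamma+\frac d2} \mp \tfrac14 L^{\rm sc}_{\gamma,d-1}\Haus^{d-1}(\partial\Omega)\lambda^{\gamma+\frac{d-1}2}\bigl(1+o(1)\bigr),
$$
and the error is uniform as $r_{\rm in}(\Omega)\sqrt\lambda\to\infty$.

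\textbf{Step 1: ball versus slab-like sets.} The first step is to compare $\Omega_j$ with the unit ball $B$. Testing $M^\sharp_\gamma$ with $B$ gives
$$
M^{\rm D}_\gamma(\lambda)\geq L^{\rm sc}_{\gamma,d}\lambda^{\gamma+\frac d2}-\tfrac14 L^{\rm sc}_{\gamma,d-1}\Haus^{d-1}(\partial B)\lambda^{\gamma+\frac{d-1}2}+o(\lambda^{\gamma+\frac{d-1}2}),
$$
and the analogous bound holds for $\sharp={\rm N}$.

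Now suppose $\Lambda_j\to\infty$ along a subsequence. The almost-optimality hypothesis, combined with the uniform asymptotics, yields $\limsup\Haus^{d-1}(\partial\Omega_j)\le\Haus^{d-1}(\partial B)$. In particular the perimeters are bounded. For convex sets of unit volume this bounds the diameters: a convex body with volume $1$ and diameter $D$ has perimeter $\gtrsim D^{1/(d-1)}$. Blaschke selection then provides a Hausdorff limit, up to translation, which is convex of unit volume. Since the perimeter is continuous under Hausdorff convergence of convex bodies, the isoperimetric inequality with equality forces the limit to be $B$. As every subsequence has a further subsequence with this property, the whole sequence converges.

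\textbf{Step 2: ruling out bounded $\Lambda_j$.} This is the main obstacle. Suppose instead that $\Lambda_j$ stays bounded along a subsequence. Then $\Omega_j$ is thin, with inradius $\lesssim\lambda_j^{-1/2}$, while its volume is $1$.

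For such sets I would prove the asymptotic upper bound (Dirichlet case)
$$
\Tr(-\Delta^{\rm D}_{\Omega_j}-\lambda_j)_\limminus^\gamma\le r^{\rm D}_{\gamma+\frac12,d-1}L^{\rm sc}_{\gamma,d}\lambda_j^{\gamma+\frac d2}\bigl(1+o(1)\bigr),
$$
with a matching statement for Neumann. The idea is a Laptev-type lifting: slice $\Omega_j$ by lines in a direction in which it is long, use the sharp one-dimensional P\'olya bound in that direction, and use the identity $L^{\rm sc}_{\gamma,1}L^{\rm sc}_{\gamma+\frac12,d-1}=L^{\rm sc}_{\gamma,d}$ on the $(d-1)$-dimensional cross-sections. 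The cross-sections are convex, so $r_{\gamma+\frac12,d-1}$ applies to them. The same construction, using cylinders $\omega\times(0,\ell)$ with $\ell\to\infty$, gives the matching lower bound and hence Proposition~\ref{prop:shapeoptasymp r}.

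What remains is an improvement at order $\lambda^{\gamma+\frac{d-1}2}$ in this thin regime. The point is that bounded $\Lambda_j$ forces $\Haus^{d-1}(\partial\Omega_j)\gtrsim\lambda_j^{1/2}/\Lambda_j\to\infty$ relative to volume. The boundary contribution is then of order $\lambda^{\gamma+\frac d2}$ times a constant, not merely $\lambda^{\gamma+\frac{d-1}2}$. Under the assumption $\gamma>\gamma^\sharp_{d-1}-\frac12$ the excess factor in dimension $d-1$ equals $1$. By continuity, the improved Berezin--Li--Yau/Kr\"oger inequalities then show that the sliced bound is \emph{strictly} worse than semiclassics, by a factor $1-c(\Lambda)$ with $c(\Lambda)>0$ for bounded $\Lambda$. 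This contradicts the lower bound from the ball, so Step 1 applies, proving (a).

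The delicate point is making this strict gap uniform. I would apply compactness of the possible rescaled cross-sections, using the continuity of $\gamma\mapsto r^\sharp_{\gamma,d-1}$ and the fact that $\gamma$ lies strictly above the critical exponent.

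\textbf{Step 3: the subcritical case (b).} When $\gamma+\frac12\le\gamma^\sharp_{d-1}$, the ball only gives the ratio $1$, whereas Proposition~\ref{prop:shapeoptasymp r} gives a limit $r^\sharp_{\gamma+\frac12,d-1}$. Strict inequality $r^\sharp>1$ holds for $\gamma+\frac12<\gamma^\sharp_{d-1}$. In the equality case $\gamma+\frac12=\gamma^\sharp_{d-1}$ one instead uses the perimeter term: thin sets still beat sets with $\Lambda\to\infty$, because the latter lose $\frac14 L\,\Haus^{d-1}\lambda^{\gamma+\frac{d-1}2}$.

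In either case, $\Lambda_j\to\infty$ along a subsequence contradicts almost-optimality, so $\Lambda_j$ is bounded. For the cylinder containment, I would use John's ellipsoid to reduce to a box with side lengths $a_1\le\dots\le a_d$. The slicing bound is strictly suboptimal unless the $d-1$ shortest sides are all $O(r_{\rm in})$. Otherwise the cross-sections are themselves elongated in a second direction, and a second lifting step gives ratio $r^\sharp_{\gamma+1,d-2}$-type quantities strictly below the optimum. I expect this last quantitative step to require the most care.
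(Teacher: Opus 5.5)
Your Step~1 is exactly the second half of the paper's argument. It uses the uniform two-term semiclassics once $r_{\rm in}(\Omega_j)\sqrt{\lambda_j}\to\infty$, then compares perimeters with $B$, and concludes by Blaschke selection and the isoperimetric inequality. The contradiction scheme of your Step~2 is also the paper's: thin sets have limiting ratio strictly below $1$, while Proposition~\ref{prop:shapeoptasymp r} forces the ratio $1$ because $r^\sharp_{\gamma+\frac12,d-1}=1$.

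The gap is the tool you use to obtain the thin-set bound. For a convex set that is not a product, the slicing inequality
\[
\Tr(-\Delta^{\rm D}_\Omega-\lambda)_\limminus^\gamma \le L^{\rm sc}_{\gamma,1}\int_{\R}\Tr(-\Delta^{\rm D}_{\Omega(t)}-\lambda)_\limminus^{\gamma+\frac12}\,dt \,,
\]
with $\Omega(t)\subset\R^{d-1}$ the cross-sections, is Laptev's operator-valued version of Berezin--Li--Yau. It is only available for $\gamma\geq 1$. For $\sharp={\rm N}$ you would additionally need a reverse lifting, which is not straightforward because $-\Delta^{\rm N}_\Omega$ does not decompose along the Neumann Laplacians of the cross-sections.

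The one-dimensional P\'olya bound can be used in the long direction only for products $\omega\times(0,\ell)$. That is why your cylinder lower bound is fine while the upper bound is not.

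For $\gamma<1$ this inequality is out of reach. In $d=2$ the cross-sections are intervals, so combined with the 1D P\'olya bound it would give $\Tr(-\Delta^{\rm D}_\Omega-\lambda)_\limminus^\gamma\le L^{\rm sc}_{\gamma,2}|\Omega|\lambda^{\gamma+1}$ for every convex planar $\Omega$ and every $\gamma\geq 0$. That is $\gamma_2^{\rm D}=0$, i.e.\ P\'olya's conjecture for convex planar domains, which is open (cf.\ Theorem~\ref{main4}). The relevant range of part~(a) is precisely $\gamma<1$ (all $\gamma>0$ when $d=2$), so Step~2 fails exactly where it is needed. The $(1+o(1))$ bound you want for thin sets is true, but it is a genuinely asymptotic statement and cannot come from an inequality valid at each fixed $\lambda$.

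The missing ingredient is the partially semiclassical limit \cite[Theorem 1.8]{FrankLarson_CPAM26}. Assuming $\sqrt{\lambda_j}\,r_{\rm in}(\Omega_j)$ bounded, the paper proceeds as follows:
\begin{enumerate}
\item It takes the John-ellipsoid scales $\ell^{(1)}_j\le\dots\le\ell^{(d)}_j$.
\item It chooses $m<d$ with $\sqrt{\lambda_j}\,\ell^{(m)}_j$ tending to a positive finite limit and $\sqrt{\lambda_j}\,\ell^{(m+1)}_j\to\infty$.
\item It rescales anisotropically and applies Blaschke selection.
\end{enumerate}
The limiting ratio is then \emph{equal} to a Fubini integral over $m$-dimensional slices $\Omega_*(y)$ in the \emph{thin} directions, with Riesz exponent $\gamma+\frac{d-m}2$. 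It is not an integral over $(d-1)$-dimensional slices transversal to one long direction. Strictness holds slicewise because $\gamma+\frac{d-m}2>\gamma^\sharp_m$: for $m\le d-2$ this follows from $\gamma^\sharp_m\le 1$, and for $m=d-1$ from the hypothesis. Compactness of the rescaled sets replaces your uniform gap $c(\Lambda)$.

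Your Step~3 needs the same tool, since the ``second lifting step'' for the cylinder containment again presupposes the slicing inequality for $\gamma<1$. There is a further problem in the borderline case $\gamma=\gamma^\sharp_{d-1}-\frac12$. Your claim that thin sets beat the ball at order $\lambda^{\gamma+\frac{d-1}2}$ requires some $\omega\in\mathcal C_{d-1}$ and $\lambda_*>0$ \emph{attaining} ratio $1$ at the critical exponent; a cylinder over such an $\omega$ then loses only $O(\lambda^\gamma)$. Knowing only that the supremum $r^\sharp_{\gamma+\frac12,d-1}$ equals $1$ produces competitors whose deficit is a small multiple of $\lambda^{\gamma+\frac d2}$. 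That is worse than the ball, and your argument supplies no such extremal.
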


Note that Theorems \ref{main} and \ref{main3} in the introduction are consequences of part \ref{itm: Shape opt thm super critical} of Theorem \ref{thm: shape opt multicomponent}. Indeed, since $\gamma_d^\sharp<1$ by Theorem \ref{smallerthanone}, the assumption $\gamma>(\gamma_{d-1}^{\sharp}-\tfrac{1}2)_\limplus$ is trivially satisfied for $\gamma\geq 1/2$. (In fact, Theorem \ref{smallerthanone} is only needed for $\gamma=\frac12$. For $\gamma>\frac12$ the Berezin--Li--Yau/Kr\"oger inequalities, which imply $\gamma_d^\sharp\leq 1$, suffice.) Moreover, the assumption in Theorem \ref{main3} implies $\gamma_{d-1}^\sharp \leq 1/2$, so under this assumption the assumption $\gamma>(\gamma_{d-1}^{\sharp}-\tfrac{1}2)_\limplus$ is satisfied for any $\gamma>0$.


\subsection*{Sketches of proofs}

Proposition \ref{prop:shapeoptasymp r} and Theorem \ref{thm: shape optimization convex} are \cite[Proposition 6.1 and Theorem 1.7]{FrankLarson_CPAM26}, where full proofs are given. Here we only briefly sketch the proof of part \ref{itm: Shape opt thm super critical} of Theorem \ref{thm: shape optimization convex}.

Thus, let $\lambda_j$ and $\Omega_j$ be as in the theorem. The first step in the proof is to show that
\begin{equation}
	\label{eq:inradiusasymptotics}
	\lim_{j \to \infty} \sqrt{\lambda_j}\, r_{\rm in}(\Omega_j) = + \infty \,.
\end{equation}
To do so, we argue by contradiction. By considering the John ellipsoid associated to $\Omega_j$, we obtain, for each $j$, length scales $\ell_j^{(1)}\leq \ldots \leq \ell_j^{(d)}$ such that $\ell_j^{(1)} \sim r_{\rm in}(\Omega_j)$ and $\prod_{k=1}^d \ell_j^{(d)} \sim |\Omega_j|=1$. Therefore, after passing to a subsequence we may assume that there is an integer $1\leq m<d$ such that $\sqrt{\lambda_j}\, \ell^{(m)}_j$ has a positive, finite limit and $\sqrt{\lambda_j}\, \ell^{(m+1)}_j \to+\infty$. We now rescale $\Omega_j$ anisotropically in order to make its John ellipsoid into a unit ball and then we apply Blaschke's selection theorem to obtain a subsequence that converges to a limiting set. Thus, we are in the situation of a partially semi-classical limit, where $m$ directions degenerate. Spectral asymptotics in this limit are derived in \cite[Theorem 1.8]{FrankLarson_CPAM26}, where it is shown that if
\begin{equation*}
	\lim_{j \to \infty} \frac{\Tr(-\Delta_{\Omega_j}^\sharp-\lambda_j)_\limminus^\gamma}{L_{\gamma,d}^{\rm sc}|\Omega_j|\lambda_j^{\gamma+\frac{d}2}} 
\end{equation*}
exists, then there exists an open bounded convex non-empty set $\Omega_* \subset \R^d$ such that
\begin{equation*}
	\lim_{j \to \infty} \frac{\Tr(-\Delta_{\Omega_j}^\sharp-\lambda_j)_\limminus^\gamma}{L_{\gamma,d}^{\rm sc}|\Omega_j|\lambda_j^{\gamma+\frac{d}2}} = \frac{1}{L^{\rm sc}_{\gamma+ \frac{d-m}{2}, m}|\Omega_*|}\int_{P^\perp\Omega_*} \Tr(-\Delta_{\Omega_*(y)}^\sharp-1)_\limminus^{\gamma + \frac{d-m}{2}}\,dy \,,
\end{equation*}
where
\begin{align*}
	P^\perp\Omega_* &:= \{y\in \R^{d-m}: (x,y) \in \Omega_* \mbox{ for some } x \in \R^m\}\,,\\
	\Omega_*(y) &:= \{x \in \R^{m}: ( x,y) \in \Omega_*\}\,.
\end{align*}
More specifically, $\Omega_*$ is an anisotropic dilation of the bounded open convex non-empty set obtain by Blaschke's selection theorem. For more on partially semiclassical limits we refer to \cite{CarlenFrankLarson}.

We claim that
\begin{equation}
	\label{eq:strictineq}
	\gamma + \frac{d-m}{2} > \gamma_{m}^\sharp \,.
\end{equation}
Indeed, when $m\geq d-2$ this follows from $\gamma>0$ and $\gamma_m^\sharp\leq 1$ and when $m=d-1$ it follows from the assumption of part \ref{itm: Shape opt thm super critical} of Theorem \ref{thm: shape optimization convex}.

Using the strict inequality \eqref{eq:strictineq} one can show the strict inequality
$$
\Tr(-\Delta_{\Omega_*(y)}^{\rm D}-1)_\limminus^{\gamma + \frac{d-m}{2}} < L_{\gamma+\frac{d-m}2,m}^{\rm sc} |\Omega_*(y)|
$$
and the converse inequality for $\sharp = {\rm N}$. Thus, by Fubini's theorem,
$$
\frac{1}{L^{\rm sc}_{\gamma+ \frac{d-m}{2}, m}|\Omega_*|}\int_{P^\perp\Omega_*} \Tr(-\Delta_{\Omega_*(y)}^{\rm D}-1)_\limminus^{\gamma + \frac{d-m}{2}}\,dy
< \frac{1}{|\Omega_*|} \int_{P^\perp\Omega_*} |\Omega_*(y)|\,dy = 1
$$
and the converse inequality for $\sharp = {\rm N}$. This contradicts Proposition \ref{prop:shapeoptasymp r}, which says that
$$
\lim_{\lambda \to \infty}\frac{M_{\gamma}^{\sharp}(\lambda)}{L_{\gamma,d}^{\rm sc}\lambda^{\gamma+\frac{d}2}} =1 \,,
$$
Thus, we have proved \eqref{eq:inradiusasymptotics}.

The second step of the proof is to deduce from \eqref{eq:inradiusasymptotics} the convergence to a ball and the corresponding asymptotics of $M_\gamma^\sharp(\lambda)$. This follows from the uniform semiclassics for convex sets in \cite{FrankLarson_24}, which are summarized in the following theorem.

\begin{theorem}\label{thm: Weyl asymptotics convex}
	Let $d\geq 2$ and let $\Omega \subset \R^d$ be an open, bounded, and convex set. Then, for all $\lambda > 0$
	\begin{align*}
		\biggl|\Tr(-\Delta_\Omega^{\rm D}-&\lambda)_\limminus^\gamma - L_{\gamma, d}^{\rm sc} |\Omega| \lambda^{\gamma+\frac d2} + \frac{1}{4}L_{\gamma, d-1}^{\rm sc}\mathcal H^{d-1}(\partial\Omega)\lambda^{\gamma+\frac{d-1}2}\biggr| \hspace{135pt}\\
		&\leq C \mathcal H^{d-1}(\partial\Omega)\lambda^{\gamma+ \frac{d-1}{2}}\bigl(r_{\textup{in}}(\Omega)\sqrt{\lambda}\bigr)^{-\frac{\alpha}{11}}\,, 
    \end{align*}
    and
    \begin{align*}
		\biggl|\Tr(-\Delta_\Omega^{\rm N}-&\lambda)_\limminus^\gamma - L_{\gamma, d}^{\rm sc} |\Omega| \lambda^{\gamma+\frac d2} - \frac{1}{4}L_{\gamma, d-1}^{\rm sc}\mathcal H^{d-1}(\partial\Omega)\lambda^{\gamma+\frac{d-1}2}\biggr|\hspace{135pt} \\
        &\leq C \mathcal H^{d-1}(\partial\Omega)\lambda^{\gamma+ \frac{d-1}{2}}\Bigl[\bigl(1+\ln_\limplus\bigl(r_{\rm in}(\Omega)\sqrt{\lambda}\bigr)\bigr)^{-\alpha\max\{1, \gamma\}}+\bigl(r_{\textup{in}}(\Omega)\sqrt{\lambda}\bigr)^{1-d}\Bigr],
	\end{align*} 
	with
	\begin{equation*}
		\alpha = 1 \mbox{ for }\gamma \geq 1 \quad \mbox{and any} \quad \alpha \in (0, \gamma) \mbox{ for }0< \gamma< 1\,,
	\end{equation*}
	and where $C$ depends only on $\gamma, \alpha$, and the dimension.
\end{theorem}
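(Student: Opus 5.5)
We argue via a two-scale decomposition of $\Omega$: a boundary layer of width $\epsilon:=(r_{\rm in}(\Omega)\sqrt\lambda)^{-\theta}r_{\rm in}(\Omega)$, for a small $\theta\in(0,1)$ to be optimized, and the interior. The output should be the two-term expansion up to an error $\mathcal H^{d-1}(\partial\Omega)\lambda^{\gamma+\frac{d-1}2}$ times a negative power of $r_{\rm in}(\Omega)\sqrt\lambda$ (Dirichlet), or of its logarithm (Neumann). First, reductions. By scaling we may set $\lambda=1$, so the only parameter is $R:=r_{\rm in}(\Omega)$. We may assume $R\geq 1$; if $R\le 1$, the claim follows from the Berezin--Li--Yau/Kr\"oger type bounds $\Tr(-\Delta^\sharp_\Omega-1)^\gamma_\limminus\lesssim |\Omega|\lesssim R\,\mathcal H^{d-1}(\partial\Omega)$ for convex $\Omega$ (using $|\Omega|\le r_{\rm in}\mathcal H^{d-1}(\partial\Omega)$), together with the facts that the two explicit terms are of the same size and that the right side is $\gtrsim \mathcal H^{d-1}(\partial\Omega)$. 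For the Neumann case one also needs an upper bound on the trace, which follows from $\lambda_k^{\rm N}\ge$ comparison.

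Second, localization. Choose a partition of unity $\sum_u\phi_u^2=1$ adapted to the local scale $\ell(x)\approx \max\{\epsilon,\tfrac12\dist(x,\partial\Omega)\}$, with $|\nabla\phi_u|\lesssim\ell^{-1}$. Apply the IMS formula (Dirichlet) and its Neumann analogue. This gives upper and lower bounds for the trace by $\sum_u\Tr(\phi_u(-\Delta^\sharp_\Omega-1\mp C\ell^{-2})\phi_u)_\limminus^\gamma$, modulo standard trace-ideal manipulations; these use the Riesz exponent $\gamma>0$, which is where the restriction $\alpha<\gamma$ enters. Treat the two kinds of localized pieces separately:
\begin{enumerate}
\item Interior pieces: the exact phase-space (coherent state) bounds give $L^{\rm sc}_{\gamma,d}\int\phi_u^2\,dx$ with error $\ell^{-2}$ relative.
\item Pieces meeting the boundary: convexity shows $\partial\Omega$ is, at scale $\epsilon$, a Lipschitz graph whose deviation from its tangent half-space is controlled. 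Comparing with the half-space, where the explicit one-dimensional computation produces exactly $\mp\frac14L^{\rm sc}_{\gamma,d-1}$ times the surface measure, yields the boundary term. The error is bounded by the local flatness defect.
\end{enumerate}

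The main obstacle is controlling the total flatness defect uniformly over all convex sets, with no curvature bound. The plan is to use that for convex sets the normal map has total variation $\int_{\partial\Omega}|d\nu|$ bounded geometrically. Specifically, the set of boundary points where the boundary fails to be $\delta$-flat at scale $\epsilon$ has surface measure $\lesssim \epsilon\,\delta^{-1}R^{-1}\mathcal H^{d-1}(\partial\Omega)$ (a Steiner/quermassintegral estimate using $\mathcal H^{d-2}$-type mean width $\lesssim \mathcal H^{d-1}(\partial\Omega)/R$). Summing the contributions, the errors are:
\begin{enumerate}
\item boundary layer: $\delta+\epsilon\delta^{-1}R^{-1}$;
\item bulk localization: $\sum\ell^{-2}|\cdot|\lesssim \epsilon^{-1}\mathcal H^{d-1}(\partial\Omega)$ after integrating $\dist^{-2}$ in the layer.
\end{enumerate}
Optimizing over $\delta$ and $\theta$ gives a power loss in $R$. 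The exponent $\alpha/11$ arises from the chain of such optimizations. In the Neumann case the half-space comparison fails near rough points because of the lack of a lower bound on the Neumann form. There one uses instead an extension/ reflection argument over logarithmically many dyadic scales, leading to the logarithmic rate and the additional $R^{1-d}$ term coming from the bound on low eigenvalues.
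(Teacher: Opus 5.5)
The paper does not prove this theorem. It refers to \cite{FrankLarson_24}, noting that the Dirichlet case with $\gamma\geq1$ goes back to \cite{FrankLarson_Crelle20} and that the Neumann case rests on the heat kernel bounds of \cite{FrankLarson_heat}. Your outline (local length scale comparable to the distance to the boundary, IMS localization, half-space comparison on nearly flat patches, a quermassintegral bound for the non-flat part of a convex boundary) is in the spirit of the Dirichlet argument for $\gamma\geq1$. As a proof of the stated theorem, however, it has a genuine gap exactly where the theorem goes beyond that case.

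The step you call ``standard trace-ideal manipulations'' works only at $\gamma=1$, where the variational principle for $\Tr(\cdot)_\limminus$ is available; larger $\gamma$ then follow by integrating in $\lambda$. For $0<\gamma<1$ it breaks down already at leading order. The symbol of $\phi_u K\phi_u$, with $K$ your shifted operator, is $\phi_u^2(|\xi|^2-\lambda)$. So in the regime $\ell\sqrt\lambda\gg1$ one has $\Tr(\phi_uK\phi_u)_\limminus^\gamma\approx L^{\rm sc}_{\gamma,d}\lambda^{\gamma+\frac d2}\int\phi_u^{2\gamma}\,dx$, not $\int\phi_u^2\,dx$. For any partition with $\sum_u\phi_u^2=1$ and $|\nabla\phi_u|\lesssim\ell^{-1}$, one has $\sum_u\int\phi_u^{2\gamma}\,dx\geq(1+c)|\Omega|$ with $c>0$ independent of $\Omega$. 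Thus the lower half of your two-sided bound is false, and the upper half misses the Weyl term by a fixed fraction. Likewise, the coherent-state bounds you invoke for interior pieces rely on convexity of $t\mapsto t_\limminus^\gamma$, i.e.\ on $\gamma\geq1$.

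So the proposal has no mechanism for $0<\gamma<1$, and ``this is where $\alpha<\gamma$ enters'' is not backed by any estimate. Something genuinely different is needed to go below $\gamma=1$. The obvious Tauberian step from the $\gamma=1$ bound does not suffice: using only monotonicity of the counting function, it turns a relative error $\eta$ in the boundary term at order $1$ into $\eta^{(1+\gamma)/2}(r_{\rm in}\sqrt\lambda)^{(1-\gamma)/2}$ at order $\gamma$. For $\eta=(r_{\rm in}\sqrt\lambda)^{-1/11}$ this is small only when $\gamma>5/6$.

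The Neumann half is not really addressed either. You note that the half-space comparison fails near rough boundary points and then appeal to an unspecified ``extension/reflection argument over logarithmically many dyadic scales''. Nothing in this produces the rate $(1+\ln_\limplus(r_{\rm in}\sqrt\lambda))^{-\alpha\max\{1,\gamma\}}$, which is logarithmic even for $\gamma\geq1$ and whose exponent keeps improving with $\gamma$. A rate of this shape is what one would expect when heat-kernel (Laplace-transform) information is converted into Riesz means by a Tauberian argument. That fits the paper's remark that the heat kernel bounds of \cite{FrankLarson_heat} are a key ingredient in the Neumann case, and your plan contains no such ingredient.

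Finally, even for Dirichlet with $\gamma\geq1$, the exponent $1/11$ is asserted rather than derived. Taken at face value, your error list $\delta+\epsilon\delta^{-1}R^{-1}+\epsilon^{-1}$ (with $\lambda=1$, $R=r_{\rm in}(\Omega)$) optimizes to $R^{-1/3}$. The cost of replacing a merely $\delta$-flat patch by a half-space, which you simply posit to be $\delta$, has not been estimated, and the claimed exponent does not follow from your bookkeeping.
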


The proof of this theorem is rather involved and we refer to \cite{FrankLarson_24} for the details. The Dirichlet case for $\gamma\geq 1$ had previously been treated in \cite{FrankLarson_Crelle20}. An important ingredient in the Neumann case are the heat kernel bounds from \cite{FrankLarson_heat}.

In the situation at hand, Theorem \ref{thm: Weyl asymptotics convex} implies
$$
\Tr(-\Delta_{\Omega_j}^\sharp-\lambda_j)_\limminus^\gamma = L_{\gamma,d}^{\rm sc} \lambda_j^{\gamma+\frac d2} \mp \tfrac14 L_{\gamma,d-1}^{\rm sc} \lambda_j^{\gamma+\frac{d-1}2} \mathcal H^{d-1}(\partial\Omega_j) + o(\lambda_j^{\gamma+\frac{d-1}2}) \,.
$$
with the upper and the lower sign in the Dirichlet and the Neumann case, respectively. Similarly, if $B$ is a ball with $|B|=1$, then
$$
\Tr(-\Delta_{B}^\sharp-\lambda_j)_\limminus^\gamma = L_{\gamma,d}^{\rm sc} \lambda_j^{\gamma+\frac d2} \mp \tfrac14 L_{\gamma,d-1}^{\rm sc} \lambda_j^{\gamma+\frac{d-1}2} \mathcal H^{d-1}(\partial B) + o(\lambda_j^{\gamma+\frac{d-1}2}) \,.
$$
Meanwhile, using $B$ as a competitor in the optimization problem and recalling the almost optimiality assumption on $\Omega_j$, we see that
\begin{align*}
	& \Tr(-\Delta_{B}^{\rm D} -\lambda_j)_\limminus^\gamma \leq M_\gamma^{\rm D}(\lambda_j) = \Tr(-\Delta_{\Omega_j}^{\rm D}-\lambda_j)_\limminus^\gamma + o(\lambda_j^{\gamma+\frac{d-1}2}) \,, \\
	& \Tr(-\Delta_{B}^{\rm N} -\lambda_j)_\limminus^\gamma \geq M_\gamma^{\rm D}(\lambda_j) = \Tr(-\Delta_{\Omega_j}^{\rm N}-\lambda_j)_\limminus^\gamma + o(\lambda_j^{\gamma+\frac{d-1}2}) \,.
\end{align*}
It follows that, in both cases,
$$
\mathcal H^{d-1}(\partial\Omega_j) \leq \mathcal H^{d-1}(\partial B) + o(1) \,.
$$

By the isoperimetric inequality, this implies that $\{\Omega_j\}_{j\geq 1}$ converges, up to a translation, in the Hausdorff sense to a ball. Indeed, if $\Omega$ denotes a limit point of $\{\Omega_j\}_{j\geq 1}$ (which exists by Blaschke's theorem), then $|\Omega|=1$ and $\mathcal H^{d-1}(\partial\Omega) \leq \mathcal H^{d-1}(\partial B)$. By the isoperimetric inequality, this forces $\Omega$ to be a ball, as claimed. 

Since $\mathcal H^{d-1}(\partial\Omega_j) = \mathcal H^{d-1}(\partial B) + o(1)$, the above asymptotics also give the desired asymptotics of $M_\gamma^\sharp$. This completes our sketch of proof of part \ref{itm: Shape opt thm super critical} in Theorem \ref{thm: shape optimization convex}. \qed


\section{A multi-component optimization problem}\label{sec:multicomp}

This section contains new material and we will provide complete proofs. We consider the pair of shape optimization problems $\widetilde M_\gamma^{\rm D}(\lambda)$ and $\widetilde M_\gamma^{\rm N}(\lambda)$, defined in \eqref{eq:defopttilde}.

While the results show some superficial similarity with those in the previous section, it is important to note that it is the critical exponent $\gamma_d^\sharp$ in dimension $d$, rather than $d-1$, that enters the results.

\begin{proposition}\label{prop: energy shape opt multicomponent}
	Fix $\sharp \in \{{\rm D}, {\rm N}\}$ and $\gamma \geq 0$. Then
	\begin{equation*}
		\lim_{\lambda\to \infty} \frac{\widetilde{M}^{\sharp}_\gamma(\lambda)}{L_{\gamma,d}^{\rm sc}\lambda^{\gamma+ \frac{d}2}} = r_{\gamma,d}^\sharp\,.
	\end{equation*}
\end{proposition}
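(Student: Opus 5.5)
We prove the two inequalities separately, writing the argument for $\sharp={\rm D}$; the Neumann case is identical with all inequalities reversed and $\sup$ replaced by $\inf$.

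\emph{Upper bound.} Let $\widetilde\Omega\in\widetilde{\mathcal C}_d$ with $|\widetilde\Omega|=1$, and let $\{\Omega_k\}$ be its connected components, each in $\mathcal C_d$. The Dirichlet (and Neumann) Laplacian on a disjoint union is the direct sum of the operators on the components, so
$$
\Tr(-\Delta_{\widetilde\Omega}^\sharp-\lambda)_\limminus^\gamma=\sum_k \Tr(-\Delta_{\Omega_k}^\sharp-\lambda)_\limminus^\gamma .
$$
By the definition of $r_{\gamma,d}^{\rm D}$, each summand is at most $r^{\rm D}_{\gamma,d}L_{\gamma,d}^{\rm sc}|\Omega_k|\lambda^{\gamma+\frac d2}$. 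Summing over $k$ and using $\sum_k|\Omega_k|=1$ gives $\widetilde M^{\rm D}_\gamma(\lambda)\le r^{\rm D}_{\gamma,d}L^{\rm sc}_{\gamma,d}\lambda^{\gamma+\frac d2}$ for every $\lambda$. Hence the $\limsup$ of the ratio is at most $r^{\rm D}_{\gamma,d}$, by fact (a) a finite number.

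\emph{Lower bound.} Fix $\epsilon>0$. By definition of the supremum there are $\Omega_0\in\mathcal C_d$ and $\mu>0$ with
$$
\Tr(-\Delta^{\rm D}_{\Omega_0}-\mu)_\limminus^\gamma\ge (r^{\rm D}_{\gamma,d}-\epsilon)L^{\rm sc}_{\gamma,d}|\Omega_0|\mu^{\gamma+\frac d2}.
$$
The ratio in the definition of $r^\sharp_{\gamma,d}$ is invariant under $\Omega\mapsto t\Omega$, $\lambda\mapsto t^{-2}\lambda$, so we may scale $\Omega_0$ to $\Omega_\lambda$ with $\mu_\lambda=\lambda$. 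Then $|\Omega_\lambda|=(\mu/\lambda)^{d/2}|\Omega_0|\to0$ as $\lambda\to\infty$. Let $N=\lfloor 1/|\Omega_\lambda|\rfloor$, which tends to infinity. Build $\widetilde\Omega$ from $N$ disjoint, well-separated translates of $\Omega_\lambda$ together with one extra convex component $R$ (a small ball) of measure $1-N|\Omega_\lambda|\in[0,|\Omega_\lambda|)$, dropped if this measure is zero. Then $\widetilde\Omega\in\widetilde{\mathcal C}_d$ and $|\widetilde\Omega|=1$.

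For Dirichlet, the trace of $R$ is nonnegative and can simply be discarded, giving
$$
\widetilde M^{\rm D}_\gamma(\lambda)\ge N(r^{\rm D}_{\gamma,d}-\epsilon)L^{\rm sc}_{\gamma,d}|\Omega_\lambda|\lambda^{\gamma+\frac d2}.
$$
Since $N|\Omega_\lambda|\to1$, this yields $\liminf\ge r^{\rm D}_{\gamma,d}-\epsilon$, and $\epsilon$ is arbitrary.

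For Neumann the leftover component must be controlled, because its trace enters with a harmful sign; this is the only real obstacle. Two estimates handle it. First, for convex $R$ the Neumann trace is bounded by a standard Weyl-type upper bound, $\Tr(-\Delta^{\rm N}_R-\lambda)_\limminus^\gamma\le C(\lambda^\gamma+|R|\lambda^{\gamma+\frac d2})$, coming from the zero eigenvalue plus a Kröger-type or uniform bound. Second, the $\lambda^\gamma$ term is negligible compared with $\lambda^{\gamma+\frac d2}$. The $|R|$ term is also negligible, since $|R|<|\Omega_\lambda|\to0$.

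Alternatively, the leftover can be avoided entirely by putting all the slack into the copies: use $N$ copies each of measure exactly $1/N$, and choose $\lambda$-dependent dilations so that $\lambda|\Omega|^{2/d}$ stays near $\mu|\Omega_0|^{2/d}$. Continuity of the trace in $\mu$ then gives ratios within $2\epsilon$ of $r^\sharp_{\gamma,d}$. This cleaner route covers both boundary conditions uniformly, including $\gamma=0$, where continuity in $\mu$ fails only at eigenvalues and can be circumvented by choosing $\mu$ slightly perturbed, using the choice of the half-open projection $\1_{[0,\lambda)}$.

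Combining the two bounds gives the claimed limit.
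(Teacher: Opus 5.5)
Your proposal is correct and follows essentially the paper's proof. The easy direction comes from additivity of the trace over connected components together with the definition of $r^\sharp_{\gamma,d}$, and the other direction from $\lfloor 1/|\Omega_\lambda|\rfloor$ disjoint rescaled copies of a near-extremal pair plus a small leftover ball contributing $O(\lambda^\gamma)$. The one imprecision is your justification of the Neumann bound for the leftover piece: it is not uniform over all convex $R$ (thin sets violate it), and Kr\"oger's inequality bounds Neumann Riesz means from below, not above. For a ball $B_\eta$, however, the bound follows at once by scaling, $\Tr(-\Delta^{\rm N}_{B_\eta}-\lambda)_\limminus^\gamma=\eta^{-2\gamma}\Tr(-\Delta^{\rm N}_{B_1}-\lambda\eta^2)_\limminus^\gamma$ with $\lambda\eta^2$ bounded, which is exactly how the paper handles it.
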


\begin{theorem}\label{thm: shape opt multicomponent}
	Let $d\geq 2$, $\gamma\geq 0$ and $\sharp \in \{{\rm D}, {\rm N}\}$. Let $\{\lambda_j\}_{j\geq 1}\subset (0, \infty)$ be a sequence with $\lim_{j\to \infty}\lambda_j = \infty$ and let $\{\widetilde\Omega_j\}_{j\geq 1}$ be a sequence of sets in $\widetilde{\mathcal C}_d$ with $|\widetilde\Omega_j|=1$ satisfying
	\begin{equation*}
		\lim_{j \to \infty} \frac{\Tr(-\Delta^{\sharp}_{\widetilde\Omega_j}-\lambda_j)_\limminus^\gamma-\widetilde M^{\sharp}_\gamma(\lambda_j)}{\lambda_j^{\gamma + \frac{d-1}{2}}} = 0\,.
	\end{equation*}
	
	\begin{enumerate}[label=\textup{(}\hspace{-0.4pt}\alph*\textup{)}]
		\item\label{eq: multicomp supercritical} If $\gamma >\gamma_d^{\sharp}$, then up to translations the sequence $\{\widetilde\Omega_j\}_{j\geq 1}$ converges to a a single ball $B \subset \R^d$ with $|B|=1$ and
		\begin{equation*}
			\widetilde M^{\sharp}_\gamma(\lambda) = \Tr(-\Delta_B^\sharp-\lambda)_\limminus^\gamma + o(\lambda^{\gamma+ \frac{d-1}2})\quad \mbox{as }\lambda \to \infty\,. 
		\end{equation*}
		\item\label{eq: multicomp subcritical 1} If $\gamma<\gamma_d^\sharp$ and $\Omega_j$ is a connected component of $\widetilde \Omega_j$ for each $j \geq 1$, then
		\begin{equation*}
			\lim_{j\to \infty}|\Omega_j| =0 \quad \mbox{or} \quad \liminf_{j\to \infty} r_{\rm in}(\Omega_j)\sqrt{\lambda_j}<\infty\,.
		\end{equation*}
		In particular, the limit of any subsequence of connected components that converges in $(\mathcal{C}_d, d^H)$ modulo translations is the empty set.
		
		\item\label{eq: multicomp subcritical} If $r_{\gamma+ \frac{1}2,d-1}^\sharp \neq r_{\gamma,d}^\sharp$, then $\widetilde\Omega_j$ consists of $\gtrsim_{\gamma,d} \lambda_j^{d/2}$ disjoint components.
	\end{enumerate}
\end{theorem}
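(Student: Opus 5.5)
The plan is to exploit additivity: for $\widetilde\Omega=\bigsqcup_k\Omega^{(k)}$ the Riesz mean is the sum of the Riesz means of the components. Proposition~\ref{prop: energy shape opt multicomponent} is taken as given. We write everything in the Dirichlet case; the Neumann case is identical with the inequalities reversed.

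\textbf{Part (a).} Let $\gamma>\gamma_d^{\rm D}$. Then $r^{\rm D}_{\gamma,d}=1$, and by the ball competitor
\[
\widetilde M_\gamma^{\rm D}(\lambda)\geq L^{\rm sc}_{\gamma,d}\lambda^{\gamma+\frac d2}-C\lambda^{\gamma+\frac{d-1}2}.
\]
For each component $\Omega$ we define the deficit
\[
\delta(\Omega,\lambda):=L^{\rm sc}_{\gamma,d}|\Omega|\lambda^{\gamma+\frac d2}-\Tr(-\Delta_\Omega^{\rm D}-\lambda)^\gamma_\limminus\geq0 .
\]
Almost optimality then gives $\sum_k\delta(\Omega_j^{(k)},\lambda_j)\leq C\lambda_j^{\gamma+\frac{d-1}2}$. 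The key lemma is a uniform lower bound on the deficit, valid for some $c>0$ and all convex $\Omega$:
\[
\delta(\Omega,\lambda)\geq c\min\bigl\{|\Omega|\lambda^{\gamma+\frac d2},\ \mathcal H^{d-1}(\partial\Omega)\lambda^{\gamma+\frac{d-1}2}\bigr\}.
\]
In the regime $r_{\rm in}\sqrt\lambda\geq B_0$ this follows from Theorem~\ref{thm: Weyl asymptotics convex}. In the regime $r_{\rm in}\sqrt\lambda\leq B_0$, one runs the interpolation argument behind Theorem~\ref{smallerthanone}: by strict monotonicity in $\gamma$ from some $\gamma'\in(\gamma_d^{\rm D},\gamma)$, the ratio to the semiclassical term is $\leq c_B<1$. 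Here $|\Omega|\lambda^{d/2}\gtrsim \mathcal H^{d-1}(\partial\Omega)\lambda^{\frac{d-1}2}\,r_{\rm in}\sqrt\lambda$, using $|\Omega|\sim r_{\rm in}\mathcal H^{d-1}(\partial\Omega)$ for convex sets.

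From the lemma, components with $r_{\rm in}\sqrt{\lambda_j}$ bounded carry total volume $o(1)$. Indeed, their deficit is at least $c|\Omega|\lambda^{\gamma+d/2}\cdot\min(1,1/(r_{\rm in}\sqrt\lambda))\gtrsim|\Omega|\lambda^{\gamma+d/2}$, and the total deficit is only $O(\lambda^{\gamma+\frac{d-1}2})$. The remaining components have $r_{\rm in}\sqrt\lambda\to\infty$, so the two-term expansion of Theorem~\ref{thm: Weyl asymptotics convex} applies to each of them (summing errors, which are $o(\mathcal H^{d-1}(\partial\Omega))$ uniformly, is the delicate bookkeeping step). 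Comparing with the ball yields
\[
\sum_k\mathcal H^{d-1}(\partial\Omega_j^{(k)})+(\text{contribution of small components})\leq\mathcal H^{d-1}(\partial B)+o(1).
\]
By the isoperimetric inequality and the strict concavity of $t\mapsto t^{(d-1)/d}$, a sum of surface areas of sets of total volume $1-o(1)$ can be at most $\mathcal H^{d-1}(\partial B)+o(1)$ only if one component has volume $1-o(1)$. Blaschke's theorem and the isoperimetric inequality then give convergence to a ball, and the energy asymptotics follow as in Section~\ref{sec:convex}. We expect the main obstacle to be the deficit lemma, in particular its uniformity across the intermediate regime, where one must quantify the statement ``$r_{\gamma,d}$ restricted to $r_{\rm in}\sqrt\lambda\leq B$ is $<1$'' with a linear dependence on volume.

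\textbf{Part (b).} Let $\gamma<\gamma_d^{\rm D}$, so $r^{\rm D}_{\gamma,d}>1$. Suppose $\Omega_j$ is a component with $|\Omega_j|\geq\epsilon$ and $r_{\rm in}(\Omega_j)\sqrt{\lambda_j}\to\infty$ along a subsequence. By Theorem~\ref{thm: Weyl asymptotics convex}, its contribution is $\leq L^{\rm sc}_{\gamma,d}|\Omega_j|\lambda_j^{\gamma+d/2}(1+o(1))$. The rest has volume $1-|\Omega_j|$ and contributes at most
\[
\widetilde M^{\rm D}_\gamma\bigl(\lambda_j(1-|\Omega_j|)^{2/d}\bigr)
\]
after rescaling, which by Proposition~\ref{prop: energy shape opt multicomponent} is $\leq r_{\gamma,d}(1-|\Omega_j|)L^{\rm sc}_{\gamma,d}\lambda_j^{\gamma+d/2}(1+o(1))$. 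The total is then at most $(r_{\gamma,d}-\epsilon(r_{\gamma,d}-1))L^{\rm sc}_{\gamma,d}\lambda_j^{\gamma+d/2}$, contradicting Proposition~\ref{prop: energy shape opt multicomponent} and almost optimality. The statement about Hausdorff limits follows, since a nonempty limit would have positive volume and, after scaling, inradius of order $1$, so $r_{\rm in}\sqrt{\lambda_j}\to\infty$.

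\textbf{Part (c).} If the number of components $N_j\leq\epsilon\lambda_j^{d/2}$, pass to scaled components $\Omega^{(k)}\sqrt{\lambda_j}$ with scaled volumes $v_k$ summing to $\lambda_j^{d/2}$. Then the average volume is $\geq\epsilon^{-1}$. Components of scaled volume $\geq A$ satisfy
\[
\Tr\leq L^{\rm sc}_{\gamma,d}\,v\,\bigl(r_{\gamma+\frac12,d-1}+o_A(1)\bigr)
\]
by Proposition~\ref{prop:shapeoptasymp r}. Components of smaller volume satisfy $\Tr\leq r_{\gamma,d}L^{\rm sc}_{\gamma,d}v$ and carry a volume fraction at most $A\epsilon$. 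If $r_{\gamma+\frac12,d-1}<r_{\gamma,d}$, this bounds the total strictly below $r_{\gamma,d}L^{\rm sc}_{\gamma,d}\lambda_j^{\gamma+d/2}$ for small $\epsilon$, a contradiction. (In general $r_{\gamma+\frac12,d-1}\leq r_{\gamma,d}$ by taking cylinders, so $\neq$ means $<$.)
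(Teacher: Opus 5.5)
Your argument is correct in substance. Parts (b) and (c) follow the paper. In (c) you argue contrapositively through the number of components, which is equivalent to the paper's statement that components of measure $\le(\Lambda/\lambda_j)^{d/2}$ carry all the mass.

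The genuine difference is in part (a). The paper splits the components of $\widetilde\Omega_j$ by \emph{volume}:
\begin{itemize}
\item Components of measure $\le(\Lambda/\lambda_j)^{d/2}$ carry measure $O((\Lambda/\lambda_j)^{1/2})$, by Theorem~\ref{thm: improved inequality above critical gamma} and the isoperimetric inequality.
\item Each larger component is bounded, after rescaling, by $M^\sharp_\gamma(\lambda_j|\Omega_{j,k}|^{2/d})$, and hence, via Theorem~\ref{thm: shape optimization convex}\ref{itm: Shape opt thm super critical}, by the ball value plus an $\epsilon$-error.
\item This yields a dominant component, and a second pass with Theorem~\ref{thm: Weyl asymptotics convex} then bounds its perimeter.
\end{itemize}
You instead split by $r_{\rm in}\sqrt{\lambda_j}$. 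Your deficit bound combined with $\Haus^{d-1}(\partial\Omega)\geq|\Omega|/r_{\rm in}(\Omega)$ shows that components with $r_{\rm in}\sqrt{\lambda_j}\le R$ carry measure $O(R\lambda_j^{-1/2})$ regardless of their volume. It also gives $\sum\Haus^{d-1}(\partial\Omega_{j,k})=O(1)$ over the remaining components. Hence the errors from Theorem~\ref{thm: Weyl asymptotics convex}, summed over those components, are $\epsilon(R)\lambda_j^{\gamma+\frac{d-1}2}$ with $\epsilon(R)\to0$, and your bookkeeping closes once you let $R\to\infty$ slowly.

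What your route buys:
\begin{itemize}
\item Part (a) no longer uses Theorem~\ref{thm: shape optimization convex}. So it needs neither Proposition~\ref{prop:shapeoptasymp r} nor the partially semiclassical limit, nor the implicit check $\gamma>(\gamma^\sharp_{d-1}-\frac12)_\limplus$.
\item The bound $\sum\Haus^{d-1}\le\Haus^{d-1}(\partial B)+o(1)$ gives the existence of the dominant component and its perimeter bound in one step.
\end{itemize}
The paper's route is more modular: it treats each sizable component as a competitor for $M^\sharp_\gamma$ and reuses the result of Section~\ref{sec:convex}.

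There are three minor points to fix.

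First, your key lemma is an immediate consequence of Theorem~\ref{thm: improved inequality above critical gamma}, so cite it rather than re-derive it. Your sketch of the regime $r_{\rm in}\sqrt\lambda\le B_0$ works for Dirichlet once you name the source of strictness. Use Aizenman--Lieb from $\gamma'\in[\gamma_d^{\rm D},\gamma)$ together with Hersch's bound $\lambda_1^{\rm D}(\Omega)\ge\pi^2/(4r_{\rm in}(\Omega)^2)$, which removes a fixed fraction of the Aizenman--Lieb integral. The sketch does not work for Neumann. In thin domains the gain there comes from lower-dimensional semiclassics, not from the zero mode, and making it uniform requires the compactness argument behind that theorem.

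Second, part (b) allows $\gamma=0$, where Theorem~\ref{thm: Weyl asymptotics convex} is not available. You need the counting-function version, as the paper does via \cite{FrankLarson_CPAM26}, or a difference-quotient Tauberian argument from the $\gamma=1$ asymptotics.

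Third, in (b) the bound on the remaining components follows directly from the definition of $r^\sharp_{\gamma,d}$ applied componentwise. Invoking Proposition~\ref{prop: energy shape opt multicomponent} at the parameter $\lambda_j(1-|\Omega_j|)^{2/d}$ is unnecessary, and it would be unjustified if that parameter stayed bounded.
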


\begin{remark}
	Note that $r_{\gamma+ \frac{1}2,d-1}^\sharp \neq r_{\gamma,d}^\sharp$ for $\gamma \in ((\gamma_{d-1}^\sharp-\frac{1}2)_\limplus, \gamma_d^\sharp]$.
\end{remark}

Note that Theorems \ref{main2} and \ref{main4} in the introduction are consequences of part \ref{eq: multicomp supercritical} of Theorem \ref{thm: shape opt multicomponent}. Indeed, since $\gamma_d^\sharp<1$ by Theorem \ref{smallerthanone}, the assumption $\gamma>\gamma_d^\sharp$ is trivially satisfied for $\gamma\geq 1$. Moreover, the assumption in Theorem \ref{main4} implies $\gamma_d^\sharp =0$, so under this assumption the assumption $\gamma>\gamma_d^\sharp$ is satisfied for any $\gamma>0$.

The remainder of this section is devoted to the proofs of Proposition \ref{prop: energy shape opt multicomponent} and Theorem~\ref{thm: shape opt multicomponent}.

\begin{proof}[Proof of Proposition \ref{prop: energy shape opt multicomponent}]
	By the definition of $r_{\gamma, d}^\sharp$ and the fact that 
    $$
        \Tr(-\Delta_{\Omega_1 \cup \Omega_2}^\sharp-\lambda)_\limminus^\gamma= \Tr(-\Delta_{\Omega_1}^\sharp-\lambda)_\limminus^\gamma+\Tr(-\Delta_{\Omega_2}^\sharp-\lambda)_\limminus^\gamma
    $$ 
    if $\Omega_1, \Omega_2$ are disjoint, we only need to prove a lower bound when $\sharp =\rm D$ and an upper bound when $\sharp =\rm N$. Given $\delta>0$ there exists $(\Omega_*, \lambda_*)\in \mathcal{C}_d\times(0, \infty)$ such that
	\begin{equation*}
		\Biggl|\frac{\Tr(-\Delta_{\Omega_*}^\sharp-\lambda_*)_\limminus^\gamma}{L_{\gamma,d}^{\rm sc}|\Omega_*|\lambda_*^{\gamma+ \frac{d}2}}-r_{\gamma,d}^\sharp \Biggr|\leq \delta\,.
	\end{equation*}
	We construct a trial set $\Omega_\lambda$ for the variational problem defining $\widetilde{M}_\gamma^\sharp(\lambda)$ as follows. Set $r := \bigl(\frac{\lambda_*}{\lambda}\bigr)^{1/2}$, let $M$ be the largest integer so that $M r^d|\Omega_*|\leq 1$, and define $\eta\geq 0$ by $Mr^d|\Omega_*|+\eta|B_1|=1$. Provided $\lambda$ is so large that $M\geq 1$ we define
	\begin{equation*}
		\Omega_\lambda := B_{\eta}(0) \cup\bigl(\cup_{j=1}^M (r\Omega_* + x_j)\bigr)\,,
	\end{equation*}
	where $\{x_j\}_{j=1}^M\subset \R^d$ are chosen so that the $M$ or $M+1$ sets in this union are disjoint. The choices of $M, r, \eta, \{x_j\}_{j=1}^M$ and the convexity of $\Omega_*, B_\eta(0)$ ensures that $\Omega_\lambda$ is a valid trial set in the variational problem defining $\widetilde{M}_\gamma^\sharp(\lambda)$. 
	
	By the behavior of Laplace eigenvalues under dilation of sets, it holds that
	\begin{equation*}
		\Tr(-\Delta_{\Omega_\lambda}^\sharp-\lambda)_\limminus^\gamma = M \Bigl(\frac{\lambda}{\lambda_*}\Bigr)^{\gamma}\Tr(-\Delta_{\Omega_*}^\sharp-\lambda_*)_\limminus^\gamma + \eta^{-2\gamma}\Tr(-\Delta_{B_1(0)}^\sharp- \lambda \eta^2)_\limminus^\gamma\,.
	\end{equation*}
	By definition $\bigl(\frac{\lambda}{\lambda_*}\bigr)^{\frac{d}2}|\Omega_*|^{-1}-1< M \leq \bigl(\frac{\lambda}{\lambda_*}\bigr)^{\frac{d}2}|\Omega_*|^{-1}$ and $0\leq \eta \leq \bigl(\frac{\lambda_*}{\lambda}\bigr)^{\frac{1}2}\bigl(\frac{|\Omega_*|}{|B_1|}\bigr)^{\frac{1}d}$. It follows that
	\begin{equation*}
		\Tr(-\Delta_{\Omega_\lambda}^\sharp-\lambda)_\limminus^\gamma = \lambda^{\gamma+ \frac{d}2}\frac{\Tr(-\Delta_{\Omega_*}^\sharp-\lambda_*)_\limminus^\gamma}{|\Omega_*|\lambda_*^{\gamma+ \frac{d}2}} + O(\lambda^{\gamma}) \quad \mbox{as }\lambda \to \infty\,.
	\end{equation*}
	By the choice of $(\Omega_*, \lambda_*)$ we conclude that
	\begin{equation*}
		\limsup_{\lambda \to \infty} \biggl|\frac{\widetilde{M}_\gamma^\sharp(\lambda)}{L_{\gamma,d}^{\rm sc}\lambda^{\gamma+ \frac{d}2}}-r_{\gamma,d}^\sharp\biggr|\leq \delta\,.
	\end{equation*}
	Since $\delta>0$ was arbitrary, this proves Proposition \ref{prop: energy shape opt multicomponent}.
\end{proof}

The proof of Theorem \ref{thm: shape opt multicomponent} depends on improved semiclassical inequalities above the critical Riesz exponent. The following result is a special case of \cite[Theorems 4.5 and 4.6]{FrankLarson_CPAM26}, where in fact a certain equivalence between $\gamma>\gamma_d^\#$ and the validity of a two-term inequality is proved.

\begin{theorem}\label{thm: improved inequality above critical gamma}
	Let $d\geq 1$.
    \begin{enumerate}
    [label=\textup{(}\hspace{-0.4pt}\alph*\textup{)}]
        \item For any $\gamma>\gamma_d^{\rm D}$ there is a constant $c_{\gamma,d}>0$ such that for all bounded convex open sets $\Omega \subset \R^d$ and $\lambda\geq 0$ we have
		\begin{equation*}
			\Tr(-\Delta_\Omega^{\rm D}-\lambda)_\limminus^\gamma \leq \Bigl(L^{\rm sc}_{\gamma,d} |\Omega| \lambda^{\gamma+\frac d2} -c_{\gamma,d}\Haus^{d-1}(\partial\Omega)\lambda^{\gamma+ \frac{d-1}{2}}\Bigr)_\limplus \,.
		\end{equation*}
        \item For any $\gamma>\gamma_d^{\rm N}$ there is a constant $c_{\gamma,d}>0$ such that for all bounded convex open sets $\Omega \subset \R^d$ and $\lambda\geq 0$ we have
		\begin{equation*}
			\Tr(-\Delta_\Omega^{\rm N}-\lambda)_\limminus^\gamma \geq L^{\rm sc}_{\gamma,d} |\Omega| \lambda^{\gamma+\frac d2} +c_{\gamma,d}\Haus^{d-1}(\partial\Omega)\lambda^{\gamma+ \frac{d-1}{2}} \,.
		\end{equation*}		
    \end{enumerate}
\end{theorem}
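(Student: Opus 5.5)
The plan is to split according to the size of the scale-invariant parameter $r_{\rm in}(\Omega)\sqrt\lambda$. For convex sets we have $\Haus^{d-1}(\partial\Omega)\, r_{\rm in}(\Omega)\leq d|\Omega|$ and $|\Omega|\lesssim_d \Haus^{d-1}(\partial\Omega)\, r_{\rm in}(\Omega)$. Hence, when $r_{\rm in}(\Omega)\sqrt\lambda\leq B$, the desired two-term bound follows from a one-term bound with a strict constant:
$$
\Tr(-\Delta_\Omega^{\rm D}-\lambda)_\limminus^\gamma\leq (1-\epsilon_B)L^{\rm sc}_{\gamma,d}|\Omega|\lambda^{\gamma+\frac d2}
\quad\text{resp.}\quad
\Tr(-\Delta_\Omega^{\rm N}-\lambda)_\limminus^\gamma\geq (1+\epsilon_B)L^{\rm sc}_{\gamma,d}|\Omega|\lambda^{\gamma+\frac d2}.
$$
The reason is that $\epsilon_B|\Omega|\lambda^{\gamma+\frac d2}\gtrsim (\epsilon_B/B)\,\Haus^{d-1}(\partial\Omega)\lambda^{\gamma+\frac{d-1}2}$. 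The positive part in the Dirichlet statement takes care of the regime where the right side would be negative.

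\textbf{Large $r_{\rm in}\sqrt\lambda$.} Here I use Theorem~\ref{thm: Weyl asymptotics convex} directly (note $\gamma>\gamma_d^\sharp\geq0$). It gives the two-term expansion with subleading coefficient $\mp\frac14 L^{\rm sc}_{\gamma,d-1}$. The error is $C\Haus^{d-1}(\partial\Omega)\lambda^{\gamma+\frac{d-1}2}$ times a quantity that tends to zero as $r_{\rm in}\sqrt\lambda\to\infty$: a power in the Dirichlet case, a power of a logarithm in the Neumann case. Choosing $B$ so large that this factor is at most $\frac18 L^{\rm sc}_{\gamma,d-1}/C$ yields the claim with $c_{\gamma,d}=\frac18L^{\rm sc}_{\gamma,d-1}$ for $r_{\rm in}\sqrt\lambda\geq B$.

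\textbf{Bounded $r_{\rm in}\sqrt\lambda$.} Fix $\gamma'\in(\gamma_d^\sharp,\gamma)$ and use the Aizenman--Lieb identity
$$
\Tr(H-\lambda)_\limminus^\gamma=\frac{1}{\mathrm B(\gamma-\gamma',\gamma'+1)}\int_0^\lambda \Tr(H-\mu)_\limminus^{\gamma'}(\lambda-\mu)^{\gamma-\gamma'-1}\,d\mu .
$$
Since $\gamma'\geq\gamma_d^\sharp$, the inequality with constant $1$ holds for every $\mu$, and integrating it reproduces exactly $L^{\rm sc}_{\gamma,d}|\Omega|\lambda^{\gamma+\frac d2}$. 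So it suffices to gain a fixed fraction on a part of the $\mu$-range.

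In the Dirichlet case this is immediate. The first eigenvalue satisfies $\lambda_1(\Omega)\geq c_d\, r_{\rm in}(\Omega)^{-2}\geq c_d\lambda/B^2$ for convex sets, so the integrand vanishes for $\mu<c_d\lambda/B^2$. On that interval we lose the corresponding piece of the semiclassical integral, which is a fixed fraction $\epsilon_B>0$ of the total.

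In the Neumann case the analogous gain at small $\mu$ comes from the zero eigenvalue: $\Tr(-\Delta^{\rm N}_\Omega-\mu)_\limminus^{\gamma'}\geq\mu^{\gamma'}$. This beats $L^{\rm sc}_{\gamma',d}|\Omega|\mu^{\gamma'+\frac d2}$ only when $|\Omega|\mu^{d/2}$ is small, and that fails for elongated sets. This uniformity over degenerate convex sets is the main obstacle.

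To overcome it I would argue by contradiction and compactness, in both cases if needed. Suppose there were $(\Omega_j,\lambda_j)$ with $r_{\rm in}(\Omega_j)\sqrt{\lambda_j}\leq B$ and ratio tending to $1$. Normalize $\lambda_j=1$ and pass to John-ellipsoid scales as in the sketch of Theorem~\ref{thm: shape optimization convex}.

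If no direction becomes large, Blaschke selection gives a limiting convex set $\Omega_\infty$ with ratio exactly $1$ at exponent $\gamma$. This contradicts the strict gain in the Aizenman--Lieb integral described above, which holds for a fixed set: for Dirichlet, $\lambda_1(\Omega_\infty)>0$; for Neumann, $\mu^{\gamma'}>L^{\rm sc}_{\gamma',d}|\Omega_\infty|\mu^{\gamma'+\frac d2}$ for small $\mu$.

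If some $d-m$ directions grow, the partially semiclassical limit \cite[Theorem 1.8]{FrankLarson_CPAM26} expresses the limiting ratio as a fiber average. The fibers carry exponent $\gamma+\frac{d-m}2>\gamma_m^\sharp$, using $\gamma_m^\sharp\leq\gamma_d^\sharp$ (this follows from the product-set argument mentioned after Theorem~\ref{main4}). The same strict fiberwise inequality used in the sketch of Theorem~\ref{thm: shape optimization convex}, integrated via Fubini, makes this ratio strictly less than $1$ (Dirichlet) or greater than $1$ (Neumann), again a contradiction.

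This yields $\epsilon_B>0$. Combining it with the large-$r_{\rm in}\sqrt\lambda$ regime, with $c_{\gamma,d}$ the minimum of the two constants, proves both parts.
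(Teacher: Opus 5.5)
Your reduction in the first paragraph runs the wrong way. The inequalities you quote give $|\Omega|\sqrt\lambda/\Haus^{d-1}(\partial\Omega)\asymp_d r_{\rm in}(\Omega)\sqrt\lambda$. So a one-term gain $\epsilon_B L^{\rm sc}_{\gamma,d}|\Omega|\lambda^{\gamma+\frac d2}$ absorbs $c\,\Haus^{d-1}(\partial\Omega)\lambda^{\gamma+\frac{d-1}2}$ only when $r_{\rm in}(\Omega)\sqrt\lambda$ is bounded \emph{below}. The upper bound $r_{\rm in}\sqrt\lambda\le B$ gives nothing, and $\epsilon_B|\Omega|\lambda^{\gamma+\frac d2}\gtrsim(\epsilon_B/B)\Haus^{d-1}(\partial\Omega)\lambda^{\gamma+\frac{d-1}2}$ is false for thin sets.

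For (a) this is repaired by an ingredient you already use. By Hersch--Protter, $\lambda_1(\Omega)\ge\pi^2/(4r_{\rm in}(\Omega)^2)$, so the Dirichlet trace vanishes for $r_{\rm in}\sqrt\lambda<\pi/2$. For $r_{\rm in}\sqrt\lambda\in[\pi/2,B]$, your Aizenman--Lieb gain then yields the claim with $c_{\gamma,d}=\epsilon_B L^{\rm sc}_{\gamma,d}\pi/(2d)$. The positive part by itself does not do this: positivity of the right side only gives $|\Omega|\sqrt\lambda>(c_{\gamma,d}/L^{\rm sc}_{\gamma,d})\Haus^{d-1}(\partial\Omega)$, whereas you need the factor $c_{\gamma,d}/(\epsilon_B L^{\rm sc}_{\gamma,d})$.

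For (b) there is a genuine hole: nothing covers $r_{\rm in}(\Omega)\sqrt\lambda\to0$ (e.g.\ slab-like sets of width $\ll\lambda^{-1/2}$), and Neumann has no vanishing mechanism. In that regime the required surface term exceeds the Weyl term by a factor $\gtrsim(r_{\rm in}\sqrt\lambda)^{-1}$. Hence no bound $(1+\epsilon_B)L^{\rm sc}_{\gamma,d}|\Omega|\lambda^{\gamma+\frac d2}$ with fixed $\epsilon_B$ can imply (b). Your compactness argument targets the wrong quantity here (along collapsing sequences the one-term ratio tends to $+\infty$ anyway), and it does not treat collapsing directions at all.

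What is missing is a Neumann substitute for Hersch's bound: $\Tr(-\Delta^{\rm N}_\Omega-\lambda)_\limminus^\gamma\ge c'_{\gamma,d}\Haus^{d-1}(\partial\Omega)\lambda^{\gamma+\frac{d-1}2}$ for all convex $\Omega$. Combined with $|\Omega|\le r_{\rm in}(\Omega)\Haus^{d-1}(\partial\Omega)$, it gives (b) whenever $r_{\rm in}\sqrt\lambda\le c'_{\gamma,d}/(2L^{\rm sc}_{\gamma,d})$. It can be proved by counting:
\begin{itemize}
\item Take a maximal $4s$-separated set $\{x_i\}\subset\Omega$ with $s\sim_d\lambda^{-1/2}$, and Lipschitz bumps equal to $1$ on $B(x_i,s)$, supported in $B(x_i,2s)$, with gradient at most $1/s$.
\item Contracting towards $x_i$ by the factor $\frac12$ shows $|\Omega\cap B(x_i,2s)|\le2^d|\Omega\cap B(x_i,s)|$. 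So these disjointly supported trial functions have Rayleigh quotient $\le\lambda/2$.
\item Since $\Omega+B_s\subset\bigcup_iB(x_i,5s)$, Steiner's formula gives $\#\{x_i\}\ge s\Haus^{d-1}(\partial\Omega)/|B_{5s}|$.
\end{itemize}
Inserted into your Aizenman--Lieb integral at exponent $\gamma'$, the same bound also gives a strict one-term gain for all $r_{\rm in}\sqrt\lambda\le B$. It replaces the single zero mode, which indeed fails for elongated sets, so the compactness step becomes unnecessary.

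Two minor points. The product-set argument gives $\gamma^\sharp_{d-1}\le\gamma^\sharp_d+\frac12$, not $\gamma^\sharp_m\le\gamma^\sharp_d$; the resulting $\gamma^\sharp_m\le\gamma^\sharp_d+\frac{d-m}2$ is what you actually need. Also, Theorem~\ref{thm: Weyl asymptotics convex} is stated for $d\ge2$, so $d=1$ needs a direct check.
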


In addition, we will use the following two geometric inequalities about bounded convex sets $\Omega\subset\R^d$:
\begin{equation}\label{eq: inradius bound}
	\frac{|\Omega|}{\Haus^{d-1}(\partial\Omega)} \leq r_{\rm in}(\Omega) \leq d\frac{|\Omega|}{\Haus^{d-1}(\partial\Omega)}
	\quad \mbox{and}\quad \diam(\Omega) \leq C_d \frac{|\Omega|}{r_{\rm in}(\Omega)^{d-1}}\,,
\end{equation}
see, for instance,~\cite{LarsonJST} and references therein.

\begin{proof}[Proof of Theorem~\ref{thm: shape opt multicomponent}]
	We split the proof in accordance to the statements in the theorem.
	
	\medskip
	
	{\noindent\it Part 1: Proof of \ref{eq: multicomp supercritical} for $\sharp =\rm D$}: Write $\widetilde{\Omega}_j = \cup_{k\geq 1}\Omega_{j,k}$ with numbering chosen so that $|\Omega_{j,1}|\geq |\Omega_{j,2}| \geq \ldots$ and $\Omega_{j,k}\in \mathcal{C}_d$ and $\Omega_{j,k}\cap \Omega_{j,k'}=\emptyset$ for each $k\neq k'$. Our goal is to prove that $\Omega_{j,1} \to B$ up to translations. (Note that every open subset of $\R^d$ has a countable number of connected components; indeed $\R^d$ is second-countable and so the open cover of $\widetilde\Omega_j$ given by its connected components has a countable sub-cover, since the connected components are disjoint this sub-cover must coincide with the cover itself.) 
	
	Let $\mathcal{K}_{j,0}$ denote the set of indices corresponding to components $\Omega_{j,k}$ of $\widetilde\Omega_j$ satisfying $\Tr(-\Delta_{\Omega_{j,k}}^{\rm D}-\lambda_j)_\limminus^\gamma =0$.
	Since $\gamma >\gamma_d^{\rm D}$, a two-term semiclassical inequality as in Theorem~\ref{thm: improved inequality above critical gamma} holds. Therefore, by applying this inequality to each connected component not in $\mathcal{K}_{j,0}$ we find that
	\begin{align*}
		\Tr(-\Delta_{\widetilde\Omega_j}^{\rm D}-\lambda_j)_\limminus^\gamma 
		&\leq \sum_{k \notin \mathcal{K}_{j,0}}\Bigl(L_{\gamma,d}^{\rm sc}|\Omega_{j,k}|\lambda_j^{\gamma+ \frac{d}2}- c_{\gamma,d} \Haus^{d-1}(\partial\Omega_{j,k})\lambda_j^{\gamma+ \frac{d-1}2}\Bigr)\\
		&=
		L_{\gamma,d}^{\rm sc}\lambda_j^{\gamma+ \frac{d}2}- c_{\gamma,d}\lambda_j^{\gamma+ \frac{d-1}2} \sum_{k \notin \mathcal{K}_{j,0}}\Haus^{d-1}(\partial\Omega_{j,k})- L_{\gamma,d}^{\rm sc}\lambda_j^{\gamma+ \frac{d}2}\sum_{k\in \mathcal{K}_{j,0}}|\Omega_{j,k}|\,.
	\end{align*}
	Using two-term asymptotics for $\Tr(-\Delta_B^{\rm D}-\lambda)_\limminus^\gamma$, the definition of $\widetilde{M}_\gamma^{\rm D}(\lambda_j)$, and the assumed property of $\{\widetilde{\Omega}_j\}_{j\geq 1}$ it follows that
	\begin{align*}
		L_{\gamma,d}^{\rm sc}\lambda_j^{\gamma+ \frac{d}2}& - \frac{L_{\gamma,d-1}^{\rm sc}}{4}\Haus^{d-1}(\partial B)\lambda_j^{\gamma+ \frac{d-1}2} + o(\lambda_j^{\gamma+ \frac{d-1}2})\\
		&\leq L_{\gamma,d}^{\rm sc}\lambda_j^{\gamma+ \frac{d}2}- c_{\gamma,d}\lambda_j^{\gamma+ \frac{d-1}2} \sum_{k \notin \mathcal{K}_{j,0}}\Haus^{d-1}(\partial\Omega_{j,k})- L_{\gamma,d}^{\rm sc}\lambda_j^{\gamma+ \frac{d}2}\sum_{k\in \mathcal{K}_{j,0}}|\Omega_{j,k}|\,.
	\end{align*}
	After rearranging, we deduce that
	\begin{equation}\label{eq: small component bounds 1}
		\sum_{k \in \mathcal{K}_{j,0}}|\Omega_{j,k}| \lesssim_{\gamma,d}\lambda_j^{-\frac{1}2} \quad \mbox{and}\quad  \sum_{k \notin \mathcal{K}_{j,0}}\Haus^{d-1}(\partial\Omega_{j,k}) \lesssim_{\gamma,d} 1\,.
	\end{equation}
	
	For $\Lambda>0$ let $\mathcal{K}_{j,1}(\Lambda)$ denote the set of indices corresponding to components $\Omega_{j,k}$ of $\widetilde\Omega_{j}$ such that $|\Omega_{j,k}|\leq \bigl(\frac{\Lambda}{\lambda_j}\bigr)^{\frac{d}2}$. By the isoperimetric inequality we have
	\begin{equation*}
		\sum_{k \in \mathcal{K}_{j,1}(\Lambda)\setminus \mathcal{K}_{j,0}}\Haus^{d-1}(\partial\Omega_{j,k}) \gtrsim_d  \sum_{k \in \mathcal{K}_{j,1}(\Lambda)\setminus \mathcal{K}_{j,0}}|\Omega_{j,k}|^{\frac{d-1}d}\geq \Bigl(\frac{\Lambda}{\lambda_j}\Bigr)^{-\frac{1}{2}}\sum_{k \in \mathcal{K}_{j,1}(\Lambda)\setminus \mathcal{K}_{j,0}}|\Omega_{j,k}|\,.
	\end{equation*}
	Consequently, by \eqref{eq: small component bounds 1} it follows that
	\begin{equation*}
		\sum_{k \in \mathcal{K}_{j,1}(\Lambda)\setminus \mathcal{K}_{j,0}}|\Omega_{j,k}| \lesssim_{\gamma,d} \Bigl(\frac{\Lambda}{\lambda_j}\Bigr)^{\frac{1}2}\,.
	\end{equation*}
	Asymptotically, almost all of $\widetilde \Omega_j$ (in a measure sense) is made up of connected components whose measure is $\gtrsim \lambda_j^{-\frac{d}2}.$
	
	Fix $\epsilon>0$, by Theorem~\ref{thm: shape optimization convex} we can choose $\Lambda$ so large that $M_{\gamma}^{\rm D}(\lambda) \leq \Tr(-\Delta_B^{\rm D}-\lambda)_\limminus^\gamma +\epsilon\lambda^{\gamma+ \frac{d-1}{2}}$ for all $\lambda \geq \Lambda$. Since $\lambda_j|\Omega_{j,k}|^{\frac{2}d}> \Lambda$ for each $k \notin \mathcal{K}_{j,1}(\Lambda)$, Theorem \ref{thm: shape optimization convex} and the fact that $\gamma >\gamma_d^{\rm D}$ imply that
	\begin{align*}
		\Tr(-\Delta_{\widetilde\Omega_j}^{\rm D}-\lambda_j)_\limminus^\gamma 
		&=\sum_{k \notin \mathcal{K}_{j,1}(\Lambda)\cup \mathcal{K}_{j,0}}\Tr(-\Delta_{\Omega_{j,k}}^{\rm D}-\lambda_j)_\limminus^\gamma+\sum_{k \in \mathcal{K}_{j,1}(\Lambda)\setminus \mathcal{K}_{j,0}}\Tr(-\Delta_{\Omega_{j,k}}^{\rm D}-\lambda_j)_\limminus^\gamma\\
		&\leq\sum_{k \notin \mathcal{K}_{j,1}(\Lambda)\cup \mathcal{K}_{j,0}}|\Omega_{j,k}|^{-\frac{2\gamma}d}\Tr(-\Delta_{|\Omega_{j,k}|^{-\frac{1}d}\Omega_{j,k}}^{\rm D}-\lambda_j|\Omega_{j,k}|^{\frac{2}d})_\limminus^\gamma\\
		&\quad+\sum_{k \in \mathcal{K}_{j,1}(\Lambda)\setminus \mathcal{K}_{j,0}}L_{\gamma,d}^{\rm sc}|\Omega_{j,k}|\lambda_j^{\gamma+ \frac{d}2}\\
		&\leq\sum_{k \notin \mathcal{K}_{j,1}(\Lambda)\cup \mathcal{K}_{j,0}}|\Omega_{j,k}|^{-\frac{2\gamma}d}M_\gamma^{\rm D}(\lambda_j|\Omega_{j,k}|^{\frac{2}d})+\sum_{k \in \mathcal{K}_{j,1}(\Lambda)\setminus \mathcal{K}_{j,0}}L_{\gamma,d}^{\rm sc}|\Omega_{j,k}|\lambda_j^{\gamma+ \frac{d}2}\\
		&\leq
		\sum_{k \notin \mathcal{K}_{j,1}(\Lambda)\cup \mathcal{K}_{j,0}}\Bigl(|\Omega_{j,k}|^{-\frac{2\gamma}d}\Tr(-\Delta_B^{\rm D}-\lambda_j|\Omega_{j,k}|^{\frac{2}d})_\limminus^\gamma+ \epsilon \lambda_j^{\gamma+ \frac{d-1}2}|\Omega_j|^{\frac{d-1}d}\Bigr)\\
		&\quad+\sum_{k \in \mathcal{K}_{j,1}(\Lambda)\setminus \mathcal{K}_{j,0}}L_{\gamma,d}^{\rm sc}|\Omega_{j,k}|\lambda_j^{\gamma+ \frac{d}2}\,.
	\end{align*}
	By the two-term asymptotics for $\Tr(-\Delta_B^{\rm D}-\lambda)_\limminus^\gamma$ (see Theorem \ref{thm: Weyl asymptotics convex}) it follows that
	\begin{align*}
		\Tr(-\Delta_{\widetilde\Omega_j}^{\rm D}-\lambda_j)_\limminus^\gamma
		&\leq
		L_{\gamma,d}^{\rm sc}\lambda_j^{\gamma+ \frac{d}{2}}-\Bigl(\frac{L_{\gamma,d-1}^{\rm sc}}{4}\Haus^{d-1}(\partial B)-\epsilon+o(1)\Bigr)\lambda_j^{\gamma+ \frac{d-1}{2}}\hspace{-11pt} \sum_{k \notin \mathcal{K}_{j,1}(\Lambda)\cup \mathcal{K}_{j,0}}\hspace{-11pt}|\Omega_{j,k}|^{\frac{d-1}d}\,.
	\end{align*}
	
	Using $\Tr(-\Delta_B^{\rm D}-\lambda_j)_\limminus^\gamma \leq \widetilde{M}_\gamma^{\rm D}(\lambda_j)\leq \Tr(-\Delta_{\widetilde\Omega_j}^{\rm D}-\lambda_j)_\limminus^\gamma+ o(\lambda_j^{\gamma+ \frac{d-1}2})$ and again the two-term Weyl asymptotics for $\Tr(-\Delta_B^{\rm D}-\lambda_j)_\limminus^\gamma$ we conclude that 
	\begin{equation*}
		\limsup_{j\to \infty} \sum_{k \notin \mathcal{K}_{j,1}(\Lambda)\cup \mathcal{K}_{j,0}}|\Omega_{j,k}|^{\frac{d-1}d} \leq 1- \frac{4}{L_{\gamma,d-1}^{\rm sc}\Haus^{d-1}(\partial B)}\epsilon\,.
	\end{equation*}
	Since
	\begin{equation*}
		\sum_{k \notin \mathcal{K}_{j,1}(\Lambda)\cup \mathcal{K}_{j,0}}|\Omega_{j,k}|^{\frac{d-1}d} \geq \Bigl(\min_{k \notin \mathcal{K}_{j,1}(\Lambda)\cup \mathcal{K}_{j,0}}|\Omega_{j,k}|\Bigr)^{-\frac{1}d} \sum_{k \notin \mathcal{K}_{j,1}(\Lambda)\cup \mathcal{K}_{j,0}}|\Omega_{j,k}|
	\end{equation*}
	and 
	$$
	\liminf_{j\to \infty}\sum_{k \notin \mathcal{K}_{j,1}(\Lambda)\cup \mathcal{K}_{j,0}}|\Omega_{j,k}| = 1 \,,
	$$ 
	it follows that
	\begin{equation*}
		\liminf_{j\to \infty} \min_{k \notin \mathcal{K}_{j,1}(\Lambda)\cup \mathcal{K}_{j,0}} |\Omega_{j,k}| \geq 1- c_{\gamma,d}\epsilon\,.
	\end{equation*}
	Since $\epsilon>0$ was arbitrary, it follows that in the limit $\widetilde\Omega_j$ has a connected component whose measure converges to $1$, that is, $|\Omega_{j,1}|= 1+o(1)$ as $j \to \infty$. As we have shown that $\Haus^{d-1}(\partial\Omega_{j,1})\leq C$, it follows from \eqref{eq: inradius bound} that $\inf_{j\geq 1}r_{\rm in}(\Omega_{j,1})>0$ and $\sup_{j\geq 1}\diam(\Omega_{j,1})<\infty$. In particular, by Theorem \ref{thm: Weyl asymptotics convex} it holds that
	\begin{equation*}
		\Tr(-\Delta_{\Omega_{j,1}}^{\rm D}- \lambda_j)_\limminus^{\rm D} = L_{\gamma,d}^{\rm sc}|\Omega_{j,1}|\lambda_{j}^{\gamma+ \frac{d}2} - \frac{L_{\gamma,d-1}^{\rm sc}}{4}\Haus^{d-1}(\partial\Omega_{j,1})\lambda_j^{\gamma+ \frac{d-1}2}+ o(\lambda_j^{\gamma+ \frac{d-1}2})\,.
	\end{equation*}
	Consequently, 
	\begin{align*}
		\Tr(-\Delta_{\widetilde\Omega_j}^{\rm D}-\lambda_j)_\limminus^\gamma
		&= \sum_{k\geq 1}\Tr(-\Delta_{\Omega_{j,k}}^{\rm D}-\lambda_j)_\limminus^\gamma\\
		&\leq \Tr(-\Delta_{\Omega_{j,1}}^{\rm D}-\lambda_j)_\limminus^\gamma + L_{\gamma,d}^{\rm sc}(1-|\Omega_{j,1}|)\lambda_j^{\gamma+ \frac{d}2}\\
		&\leq  L_{\gamma,d}^{\rm sc}\lambda_{j}^{\gamma+ \frac{d}2} - \frac{L_{\gamma,d-1}^{\rm sc}}{4}\Haus^{d-1}(\partial\Omega_{j,1})\lambda_j^{\gamma+ \frac{d-1}2}+ o(\lambda_j^{\gamma+ \frac{d-1}2})\,.
	\end{align*}
	By again comparing with $B$ we find
	\begin{equation*}
		\limsup_{j\to \infty}\Haus^{d-1}(\partial\Omega_{j,1})\leq \Haus^{d-1}(\partial B)\,. 
	\end{equation*}
	Since $|\Omega_{j,1}|\to 1$, Blaschke's selection theorem together with the isoperimetric inequality implies that up to translation $\Omega_{j,1}\to B$ with respect to the Hausdorff distance.
	
	Since perimeter is continuous with respect to convergence of convex sets in the Hausdorff distance, it follows that $\Haus^{d-1}(\partial\Omega_{j,1})= \Haus^{d-1}(\partial B)+o(1)$. By one final time comparing with the ball and using $\gamma>\gamma_d^{\rm D}$ to bound the contributions from $\Omega_{j,k}$ with $k \geq 1$, we find that
	\begin{align*}
		L_{\gamma,d}^{\rm sc}\lambda_j^{\gamma+ \frac{d}2} &- \frac{L_{\gamma,d-1}^{\rm sc}}{4}\Haus^{d-1}(\partial B)\lambda_j^{\gamma+ \frac{d-1}2} + o(\lambda_j^{\gamma+ \frac{d-1}2})\\
		&\leq \Tr(-\Delta_{\widetilde\Omega_j}^{\rm D}-\lambda_j)_\limminus^\gamma \\
		&\leq L_{\gamma,d}^{\rm sc}\lambda_j^{\gamma+ \frac{d}2}- \frac{L_{\gamma,d-1}^{\rm sc}}{4}\Haus^{d-1}(\partial B)\lambda_j^{\gamma+ \frac{d-1}2}+ o(\lambda_j^{\gamma+ \frac{d-1}{2}})\,.
	\end{align*}
	Consequently,
	\begin{equation*}
		\Tr(-\Delta_{\widetilde\Omega_j}^{\rm D}-\lambda_j)_\limminus^\gamma = \Tr(-\Delta_{B}^{\rm D}-\lambda_j)_\limminus^\gamma + o(\lambda_j^{\gamma+ \frac{d-1}2})\,.
	\end{equation*}
	This completes the proof of \ref{eq: multicomp supercritical} for $\sharp =\rm D$.
	
	\medskip
	
	{\noindent\it Part 2: Proof of \ref{eq: multicomp supercritical} for $\sharp =\rm N$}: 
	The idea of the proof is the same as in the Dirichlet case. The main differences are in the proof of that there is a single connected component that asymptotically carries all of the measure.
	
	Since $0$ is an eigenvalue of $-\Delta_{\widetilde\Omega_j}^{\rm N}$ with multiplicity given by the number of connected components of $\widetilde\Omega_j$, we deduce from the assumed property of $\widetilde \Omega_j$ that the number of connected components is controlled. Indeed, by Proposition \ref{prop: energy shape opt multicomponent} we deduce that
	\begin{equation}\label{eq: number of components Neu}
		\#\{\mbox{connected components of }\widetilde\Omega_j\} \lambda_j^\gamma\leq \Tr(-\Delta_{\widetilde \Omega_j}^{\rm N}-\lambda_j)_\limminus^\gamma = L_{\gamma,d}^{\rm sc}\lambda_j^{\gamma+ \frac{d}2} + o(\lambda_j^{\gamma+ \frac{d}2})\,.
	\end{equation}
	
	Write $\widetilde{\Omega}_j = \cup_{k\geq 1}\Omega_{j,k}$ with numbering chosen so that $|\Omega_{j,1}|\geq |\Omega_{j,2}| \geq \ldots$ and $\Omega_{j,k}\in \mathcal{C}_d$ and $\Omega_{j,k}\cap \Omega_{j,k'}=\emptyset$ for each $k\neq k'$. Our goal is to prove that $\Omega_{j,1} \to B$ up to translations.
	
	Since $\gamma >\gamma_d^{\rm N}$ a two-term semiclassical inequality as in Theorem~\ref{thm: improved inequality above critical gamma} holds. Therefore, by applying this inequality to each connected component we find that
	\begin{align*}
		\Tr(-\Delta_{\Omega_j}^{\rm N}-\lambda_j)_\limminus^\gamma 
		&\geq \sum_{k\geq 1}\Bigl(L_{\gamma,d}^{\rm sc}|\Omega_{j,k}|\lambda_j^{\gamma+ \frac{d}2}+ c_{\gamma,d} \Haus^{d-1}(\partial\Omega_{j,k})\lambda_j^{\gamma+ \frac{d-1}2}\Bigr)\\
		&=
		L_{\gamma,d}^{\rm sc}\lambda_j^{\gamma+ \frac{d}2}+ c_{\gamma,d}\lambda_j^{\gamma+ \frac{d-1}2}\Haus^{d-1}(\partial\widetilde\Omega_{j})\,.
	\end{align*}
	Using two-term asymptotics for $\Tr(-\Delta_B^{\rm N}-\lambda)_\limminus^\gamma$, the definition of $\widetilde{M}_\gamma^{\rm N}(\lambda_j)$, and the assumed property of $\{\widetilde{\Omega}_j\}_{j\geq 1}$ it follows that
	\begin{align*}
		L_{\gamma,d}^{\rm sc}\lambda_j^{\gamma+ \frac{d}2}& + \frac{L_{\gamma,d-1}^{\rm sc}}{4}\Haus^{d-1}(\partial B)\lambda_j^{\gamma+ \frac{d-1}2} + o(\lambda_j^{\gamma+ \frac{d-1}2})\\
		&\geq L_{\gamma,d}^{\rm sc}\lambda_j^{\gamma+ \frac{d}2}+ c_{\gamma,d}\lambda_j^{\gamma+ \frac{d-1}2} \Haus^{d-1}(\partial\widetilde\Omega_{j})\,.
	\end{align*}
	After rearranging, we deduce that
	\begin{equation}\label{eq: small component bounds 1 Neu}
		\Haus^{d-1}(\partial\widetilde\Omega_{j}) \lesssim_{\gamma,d} 1\,.
	\end{equation}
	
	For $\Lambda>0$ let $\mathcal{K}_{j}(\Lambda)$ denote the set of indices corresponding to components $\Omega_{j,k}$ of $\widetilde\Omega_{j}$ such that $|\Omega_{j,k}|\leq \bigl(\frac{\Lambda}{\lambda_j}\bigr)^{\frac{d}2}$. By the isoperimetric inequality we have
	\begin{equation*}
		\sum_{k \in \mathcal{K}_{j}(\Lambda)}\Haus^{d-1}(\partial\Omega_{j,k}) \gtrsim_d  \sum_{k \in \mathcal{K}_{j}(\Lambda)}|\Omega_{j,k}|^{\frac{d-1}d}\geq \Bigl(\frac{\Lambda}{\lambda_j}\Bigr)^{-\frac{1}{2}}\sum_{k \in \mathcal{K}_{j}(\Lambda)}|\Omega_{j,k}|\,.
	\end{equation*}
	Consequently, by \eqref{eq: small component bounds 1 Neu} it follows that
	\begin{equation*}
		\sum_{k \in \mathcal{K}_{j}(\Lambda)}|\Omega_{j,k}| \lesssim_{\gamma,d} \Bigl(\frac{\Lambda}{\lambda_j}\Bigr)^{\frac{1}2}\,.
	\end{equation*}
	That is, we have shown that asymptotically all the measure of $\widetilde\Omega_j$ is located in connected components whose measure is $\gtrsim \lambda_j^{-\frac{d}2}$. 
	
	The proof of \ref{eq: multicomp supercritical} with $\sharp =\rm N$ can now be completed by following the same strategy as in the Dirichlet case. We omit the details. 
	
	\medskip
	
	{\noindent\it Part 3: Proof of \ref{eq: multicomp subcritical 1}}: Let $\widetilde\Omega_j$ be as in the statement of the theorem and write as before $\widetilde\Omega_j = \cup_{k\geq 1}\Omega_{j,k}$. Let $\{k_j\}_{j\geq 1}$ be the sequence of indices so $\Omega_j=\Omega_{j,k_j}$ is the sequence of connected components in the statement. We shall argue that $\liminf_{j \to \infty} r_{\rm in}(\Omega_{j,k_j})\sqrt{\lambda_j} =\infty$ implies that $\lim_{j\to \infty}|\Omega_{j,k_j}|=0$. 
	
	Assume that $\liminf_{j\to \infty}r_{\rm in}(\Omega_{j,k_k})\sqrt{\lambda_j}=\infty$. Since $\Haus^{d-1}(\partial\Omega_j) \leq \frac{d|\Omega_j|}{r_{\rm in}(\Omega_j)}$ by \eqref{eq: inradius bound}, Theorem \ref{thm: Weyl asymptotics convex} (if $\gamma>0$) or \cite[Theorem 5.2]{FrankLarson_CPAM26} (if $\gamma=0$) implies that
	\begin{equation*}
		\Tr(-\Delta_{\Omega_{j,k_j}}^\sharp-\lambda_j)_\limminus^\gamma = L_{\gamma,d}^{\rm sc}|\Omega_{j,k_j}|\lambda_j^{\gamma+\frac{d}2}(1+o(1))
	\end{equation*}
	as $j \to \infty$. Therefore, by the definition of $r_{\gamma,d}^{\rm D}$ we have
	\begin{align*}
		\Tr(-\Delta_{\widetilde\Omega_j}^{\rm D}-\lambda_j)_\limminus^\gamma
		&=
		\Tr(-\Delta_{\Omega_{j,k_j}}^{\rm D}-\lambda_j)_\limminus^\gamma+\sum_{k\neq k_j} \Tr(-\Delta_{\Omega_{j,k}}^{\rm D}-\lambda_j)_\limminus^\gamma\\
		&\leq
		L_{\gamma,d}^{\rm sc}|\Omega_{j,k_j}|\lambda_j^{\gamma+ \frac{d}{2}}(1+o_{j\to \infty}(1))
		+\sum_{k\in \mathcal{K}_j(\Lambda)} r_{\gamma,d}^{\rm D}L_{\gamma,d}^{\rm sc}|\Omega_{j,k}|\lambda_j^{\gamma+ \frac{d}2}\\
		&=
		L_{\gamma,d}^{\rm sc}|\Omega_{j,k_j}|\lambda_j^{\gamma+ \frac{d}{2}}(1+o_{j\to \infty}(1))
		+r_{\gamma,d}^{\rm D}L_{\gamma,d}^{\rm sc}(1-|\Omega_{j,k_j}|)\lambda_j^{\gamma+ \frac{d}2}\,.
	\end{align*}
	As $r_{\gamma,d}^{\rm D}>1$ as $\gamma<\gamma_d^{\rm D}$ this contradicts Proposition \ref{prop: energy shape opt multicomponent} unless $\lim_{j\to \infty}|\Omega_{j,k_j}|=0$. The corresponding conclusion in the Neumann case can be proved in the same manner.
	
	Finally, if after translations a subsequence of the connected $\{\Omega_j\}_{j\geq 1}$ had a non-empty limit $\Omega_\infty \in \mathcal{C}_d$ the continuity of the volume and inradius in $(\mathcal{C}_d, d^H)$ away from the emptyset the first statement in \ref{eq: multicomp subcritical 1} in would imply that either $|\Omega_\infty|=0$ or $r_{\rm in}(\Omega_\infty)=0$ neither of which can be true for a set $\Omega_\infty \in \mathcal{C}_d$. This concludes the proof of \ref{eq: multicomp subcritical 1}.
	
	\medskip
	{\noindent\it Part 4: Proof of \ref{eq: multicomp subcritical}}: Let $\widetilde\Omega_j$ be as in the statement of the theorem and write as before $\widetilde\Omega_j = \cup_{k\geq 1}\Omega_{j,k}$. For $\Lambda>0$, denote by $\mathcal{K}_{j}(\Lambda)$ the collection of indices $k$ so that $|\Omega_{j,k}|\leq \bigl(\frac{\Lambda}{\lambda_j}\bigr)^{\frac{d}2}$. For a given $\epsilon>0$ choose $\Lambda$ so large that Proposition~\ref{prop:shapeoptasymp r} implies that
	\begin{equation*}
		\biggl|\frac{M_\gamma^\sharp(\lambda)}{r_{\gamma+ \frac{1}2,d-1}^\sharp L_{\gamma,d}^{\rm sc}\lambda^{\gamma+ \frac{d}{2}}} - 1 \biggr| \leq \epsilon \quad \mbox{for all } \lambda \geq \Lambda\,.
	\end{equation*}
	Let us consider the Dirichlet case, the Neumann case can be treated identically. By scaling of Laplacian eigenvalues, the bound on $M^{\rm D}_\gamma$ above, and the definition of $r_{\gamma,d}^{\rm D}$ yield
	\begin{align*}
		\Tr(-\Delta_{\widetilde\Omega_j}^{\rm D}-\lambda_j)_\limminus^\gamma
		&=
		\sum_{k\notin \mathcal{K}_j(\Lambda)} \Tr(-\Delta_{\Omega_{j,k}}^{\rm D}-\lambda_j)_\limminus^\gamma+\sum_{k\in \mathcal{K}_j(\Lambda)} \Tr(-\Delta_{\Omega_{j,k}}^{\rm D}-\lambda_j)_\limminus^\gamma\\
		&\leq
		\sum_{k\notin \mathcal{K}_j(\Lambda)} r_{\gamma+\frac{1}2,d-1}^{\rm D}L_{\gamma,d}^{\rm sc}(1+\epsilon)|\Omega_{j,k}|\lambda_j^{\gamma+ \frac{d}{2}}
		+\sum_{k\in \mathcal{K}_j(\Lambda)} r_{\gamma,d}^{\rm D}L_{\gamma,d}^{\rm sc}|\Omega_{j,k}|\lambda_j^{\gamma+ \frac{d}2}\,.
	\end{align*}
	If we choose $\epsilon$ so small that $r_{\gamma+\frac12,d-1}^{\rm D}(1+\epsilon) < r_{\gamma,d}^{\rm D}$, this contradicts Proposition \ref{prop: energy shape opt multicomponent} unless
	\begin{equation*}
		\sum_{k\notin \mathcal{K}_j(\Lambda)}|\Omega_{j,k}| = o(1) \quad \mbox{as }j \to \infty\,.
	\end{equation*}
	In particular, asymptotically all of the mass of $\widetilde\Omega_j$ is concentrated in components whose individual measure is $\lesssim_{\gamma,d}\lambda_j^{-\frac{d}2}$ so there must be $\gtrsim_{\gamma,d}\lambda_j^{\frac{d}2}$ such components. For $\sharp = {\rm N}$ we note that this is estimate is order-sharp by the upper bound in \eqref{eq: number of components Neu}.
\end{proof}


\bibliographystyle{amsalpha}

\end{document}